\providecommand{\norm}[1]{\lVert#1\rVert}
\DeclareMathOperator*{\argmin}{arg\,min}
\newtheorem{assumption}{Assumption}
\date{Received: date / Accepted: date}
\numberwithin{algorithm}{section}
\begin{document}
\title{Nonmonotone higher-order Taylor approximation methods for composite problems 
}

\titlerunning{Nonmonotone higher-order Taylor approximation...}        

\author{Yassine Nabou $^*$ 
}
\institute{$^*$Department of Mathematics and Statistics, University of Helsinki, Finland.\\
              \email{yassine.nabou@helsinki.fi}\\
This work has been supported by the Academy of Finland grant 345486.\\
}

\maketitle

\begin{abstract}

We study composite optimization problems in which the smooth part of the objective function is \( p \)-times continuously differentiable, where \( p \geq 1 \) is an integer. Higher-order methods are known to be effective for solving such problems, as they speed up convergence rates. These methods often require, or implicitly ensure, a monotonic decrease in the objective function across iterations. Maintaining this monotonicity typically requires that the \( p \)-th derivative of the smooth part of the objective function is globally Lipschitz or that the generated iterates remain bounded. In this paper, we propose nonmonotone higher-order Taylor approximation (NHOTA) method for composite problems. Our method achieves the same nice
global and rate of convergence properties as traditional higher-order methods while eliminating the need for global Lipschitz continuity assumptions, strict descent condition, or explicit boundedness of the iterates. Specifically, for nonconvex composite problems, we derive global convergence rate to a stationary point of order \( \mathcal{O}(k^{-\frac{p}{p+1}}) \), where \( k \) is the iteration counter. Moreover, when the objective function satisfies the Kurdyka–Łojasiewicz (KL) property, we obtain improved rates that depend on the KL parameter. Furthermore, for convex composite problems, our method achieves sublinear convergence rate of order \( \mathcal{O}(k^{-p}) \) in function values. Finally, preliminary numerical experiments on nonconvex phase retrieval problems highlight the promising performance of the proposed approach.

\end{abstract}

\keywords{Composite problems, (Non)convex minimization,  Higher-order methods, convergence rates. }

\section{Introduction and motivation}\label{sec:1}
In this paper, we consider the following composite optimization problem:  
\begin{align}  
\label{eq:problem}  
\min_{x \in \mathbb{E}} f(x) := F(x) + h(x),  
\end{align}  
where \( F : \mathbb{E} \to \mathbb{R} \) is a \( p \)-times continuously differentiable function, and \( h: \mathbb{E} \to \bar{\mathbb{R}} \) is a proper, lower semicontinuous, and convex function. Here, \( \mathbb{E} \) is a finite-dimensional real vector space. Despite its simple form, this problem occur frequently in many applications including machine learning \cite{meier2008group}, compressed sensing  \cite{tibshirani1996regression}, matrix factorization \cite{lee1999learning,lin2007projected},  (sparse) inverse covariance selection \cite{hsieh2011sparse,oztoprak2012newton},
blind deconvolution \cite{ayers1988iterative,bertero1998novel}. Further applications can be found
in deep learning \cite{deleu2021structured}, data clustering \cite{ravikumar2011high} and dictionary learning \cite{mairal2009online}.
\medskip

First-Order Methods among the most popular methods for solving problems of the form \eqref{eq:problem}. For example, proximal point (Prox) methods has been extensively studied and are widely recognized as one of the most effective approaches for solving optimization problems of the form \eqref{eq:problem}, where the objective function consists of both a smooth and a non-smooth component. Prox method operates by iteratively refining the solution through a sequence of subproblems, each of which involves the proximal operator associated with the non-smooth part of the objective function. For a given current point \( x_k \), and in the context of solving problems \eqref{eq:problem}, Prox method finds the next iterate by solving
\[
    x_{k+1} = \argmin_{x} \left\{ F(x_k) + \langle \nabla F(x_k), x - x_k \rangle + \frac{1}{2\alpha} \|x - x_k\|^2 + h(x) \right\},
\]
where \( \alpha > 0 \) is the step size that controls the update magnitude. This implies that the new iterate \( x_{k+1} \) is obtained by applying a quadratic approximation to the smooth function \( F \), while the nonsmooth function \( h \) remains unmodified and is incorporated directly into the subproblem. It is well known and straightforward to verify that this procedure can be equivalently rewritten as
\[
    x_{k+1} = \operatorname{prox}_{\alpha h} \left( x_k - \alpha \nabla F(x_k) \right),
\]
with 
\[
    \operatorname{prox}_{\alpha h}(x): = \arg\min_{y} h(y) + \frac{1}{2\alpha} \norm{y - x}^2.
\]
For a comprehensive analysis of this method, readers can refer to the seminal works by Nesterov \cite{nesterov2013gradient} and Parikh \& Boyd \cite{parikh2014proximal}. Despite its widespread applicability and strong theoretical foundations, the proximal gradient method, like other first-order methods, their convergence speed is known to be slow. This limitation arises because first-order methods rely solely on gradient information, which may not fully capture the curvature of the objective function.
\medskip

Second-order methods such as Newton-type methods are particularly appealing because they achieve superlinear or even polynomial convergence rates while efficiently escaping saddle points \cite{agarwal2017finding}. The proximal Newton method extends classical Newton’s method to the composite setting \cite{lee2014proximal}. In this approach, the smooth function \( F \) is approximated by its second-order Taylor expansion, while the nonsmooth function \( h \) remains unchanged and is incorporated directly into the subproblem. This allows the method to effectively leverage curvature information from \( F \) while maintaining the structure of \( h \), leading to improved convergence behavior.  However, classical Newton’s method lacks global convergence guarantees, as its performance heavily depends on the initialization and the local behavior of the Hessian. In particular, when the Hessian is ill-conditioned or singular, Newton's updates can be unstable, leading to divergence or poor progress toward a solution. One approach to mitigate these issues is the regularized Newton method, which improves stability by adding a regularization term to the Hessian to ensure well-posed updates and better global behavior \cite{mishchenko2023regularized,doikov2024gradient}.  Another approach, cubic regularization, addresses these instability issues by introducing a cubic term to control the step size adaptively \cite{nesterov2006cubic}. Adaptive second-order methods
with cubic regularization and inexact steps were proposed in \cite{cartis2011adaptive,cartis2011adaptiveII}. Moreover, these approaches achieves global convergence rates that are faster than those of first-order methods.

\medskip
Higher-order methods extend the principles of first- and second-order methods by utilizing derivatives of the objective function beyond just the first (gradient) and second (Hessian) orders. These methods incorporate higher-order derivatives, such as third-order tensors or even higher, to build sophisticated higher-order Taylor models. The unpublished preprint \cite{baes2009estimate} stands as the first paper deriving theoretical results of the higher-order schemes for convex problems. However the extensive complexity associated with minimizing nonconvex multivariate polynomials has posed significant challenges, rendering this initial effort unsuccessful. Despite these obstacles, a ray of hope emerged through the groundbreaking research of Nesterov in \cite{nesterov2021implementable}. Specifically, Nesterov demonstrated that by appropriately regularizing the Taylor approximation, the auxiliary subproblem remains convex and can be solved efficiently, thereby offering a promising avenue for tackling convex unconstrained smooth problems. As researchers delve deeper into the intricacies of optimization methods, particularly within the nonconvex setting \cite{birgin2017worst,cartis2019universal}, a notable focus has been placed on analyzing the complexity of high-order approaches. These approaches aim to generate solutions with small gradient norms, a crucial aspect for navigating nonconvex optimization landscapes effectively. In addressing this challenge, it is essential for such methods to maintain a satisfactory adherence to first-order optimality conditions and ensure local reductions in the objective function.  Convergence guarantees, particularly in terms of the norm of the gradient, have been established to be of order $\mathcal{O}\left(k^{-\frac{p}{p+1}} \right)$ \cite{birgin2017worst,cartis2019universal}. High-order inexact tensor methods were considered in \cite{jiang2020unified,agafonov2024inexact,martinez2017high}. Increasing the order of the oracle, denoted as \( p \), provides certain advantages. Despite the complexity involved, ongoing research efforts continues to explore the performance of high-order optimization methods in nonconvex settings, aiming for more robust and efficient techniques.

\medskip
The aforementioned methods are generally monotone, meaning that they either require or implicitly ensure a monotonic decrease in the objective function. To maintain this monotonicity, it is typically necessary to assume that the smooth part of the objective function is globally Lipschitz continuous over the entire space or the prior assumption that the iterates generated by these methods remain bounded. Nonmonotone first-order methods have been developed, which relax these assumptions and allow for more flexible descent mechanisms. Unlike traditional monotone schemes which require or enforce a strict decrease in the objective function along the iterates, nonmonotone methods allow occasional increases in function value while still guaranteeing overall convergence (see, e.g., \cite{zhang2004nonmonotone,kanzow2022convergence,de2023proximal,kanzow2024convergence,qian2024convergence}). This flexibility can be particularly beneficial for composite optimization problems, where strict descent conditions might delay progress or slow convergence. For example, \cite{kanzow2024convergence} considered a nonmonotone proximal gradient method for minimizing problem \eqref{eq:problem}, assuming that \( \nabla F \) is locally Lipschitz, and \( h \) is bounded below by an affine function. Under these mild assumptions, \cite{kanzow2024convergence} derives global convergence rates that essentially have the same rate of convergence properties as its monotone counterparts. While existing works on higher-order methods typically assume global Lipschitz continuity of the higher-order derivatives, and their convergence analysis relies on establishing a strict descent in the objective function, in this paper, we propose a nonmonotone higher-order method that does not depend on these conditions. Instead, our approach relaxes the standard smoothness assumptions and allows for a more flexible descent mechanism, enabling improved performance in settings where global Lipschitz properties do not hold or are difficult to verify.

\medskip
\textbf{Contributions.}
We propose NHOTA, a \textbf{N}onmonotone \textbf{H}igher-\textbf{O}rder \textbf{T}aylor \textbf{A}pproximation methods for solving composite optimization problems \eqref{eq:problem}. Our approach leverages higher-order Taylor approximations combined with nonmonotone techniques, allowing us to establish global convergence rates without requiring global Lipschitz continuity and/or boundedness of the iterates. Our contributions can be summarized as follows:
\begin{enumerate}
    \item We introduce NHOTA, a higher-order tensor  method that incorporates nonmonotonicity technique to solve composite problems \eqref{eq:problem}. 
    \item We derive global convergence guarantees for NHOTA under mild assumptions for nonconvex composite problems. Specifically, we show that the iterates generated by NHOTA converge to a stationary points and the convergence rate is of order $\mathcal{O}\left(k^{-\frac{p}{p+1}}\right)$, where $k$ is the iteration counter. Additionally, when the objective function satisfies a Kurdyka-Łojasiewicz (KL) property, we derive linear/sublinear convergence rates in function values, depending on the parameter of the KL condition.  
    \item For convex composite problems, we derive a global sublinear convergence rate in function values, and the convergence rate is of order $\mathcal{O}\left(k^{-p}\right)$. 
    \item  Our results are obtained without requiring global Lipschitz continuity assumptions, boundedness of the iterates, or the traditional monotonicity condition in the objective along the iterates, thereby broadening the applicability of NHOTA.
\end{enumerate}
\textbf{Contents.} The remaining parts of this paper are organized as follows. In Section \ref{sec:2}, we discuss the basic properties required for our analysis. In Section \ref{sec:3}, we introduce our method, NHOTA. Section \ref{sec:4} presents global/local convergence guarantees for NHOTA for nonconvex composite problems, followed by the analysis of the convergence rate for convex composite problems in Section \ref{sec:5}. In Section \ref{sec:6}, we present numerical illustrations of NHOTA on nonconvex phase retrieval problems. Finally, Section \ref{sec:7} concludes the paper and suggests directions for future work.

\medskip

\section{Notations and preliminaries }\label{sec:2}
 We denote a finite-dimensional real vector space with $\mathbb{E}$ and $\mathbb{E}^{*}$ its dual space composed of linear functions on $\mathbb{E}$. For any linear function $s\in\mathbb{E}^{*}$, the value of $s$ at point $x\in\mathbb{E}$ is denoted by $\langle s,x\rangle$.  Using a self-adjoint positive-definite operator $B:\mathbb{E}\rightarrow \mathbb{E}^{*}$, we endow these spaces with conjugate Euclidean~norms:
\begin{align*}
    \norm{x}=\langle Bx,x\rangle^{\frac{1}{2}},\quad x\in \mathbb{E},\qquad \norm{g}_{*}=\langle g,B^{-1}g\rangle^{\frac{1}{2}},\quad g\in \mathbb{E}^{*}.
\end{align*}   
For a twice differentiable function $\phi$ on a convex and open domain $\text{dom}\;\phi \subseteq \mathbb{E}$, we denote by $\nabla \phi(x)$ and $\nabla^{2} \phi(x)$ its gradient and hessian evaluated at  $x\in \text{dom}\;\phi$, respectively. Then, $\nabla \phi(x)\in\mathbb{E}^{*}$ and $\nabla^{2}\phi(x)h\in\mathbb{E}^{*}$ for all $x\in\text{dom}\;\phi$, $h\in\mathbb{E}$.  Throughout the paper, we consider $p,q$ positive integers.  In what follows, we often work with directional derivatives of function $\phi$ at $x$ along directions $h_{i}\in \mathbb{E}$ of order $p$, $D^{p} \phi (x)[h_{1},\cdots,h_{p}]$, with $i=1:p$.  If all the directions $h_{1},\cdots,h_{p}$ are the same, we use the notation
$ D^{p} \phi(x)[h]$, for $h\in\mathbb{E}.$
If $\phi$ is $p$ times differentiable, then $D^{p} \phi (x)$ is a symmetric $p$-linear form, and its norm is defined as:
\[
    \norm{D^{p} \phi (x)}= \max\limits_{h\in\mathbb{E}} \left\lbrace D^{p} \phi (x)[h]^{p} :\norm{h}\leq 1 \right\rbrace.  
\] 
\noindent Further, the Taylor approximation of order $p$ of the function $\phi$ at $x\in \text{dom}\;\phi$ is denoted with:
\[
    T_p^{\phi}(y; x)= \phi(x) + \sum_{i=1}^{p} \frac{1}{i !} D^{i} \phi(x)[y-x]^{i} \quad \forall y \in \mathbb{E}.
\]
 Let $\phi: \mathbb{E}\to \bar{\mathbb{R}}$ be a $p$ differentiable function on the open domain $\text{dom}\;\phi$. Then, the $p$ derivative is locally Lipschitz continuous if for any compact set $X\subset \text{dom}\;\phi$, there exist a constant $L_{p,X}^{\phi} > 0$ such that the following relation holds:
\begin{equation} \label{eq:001}
    \| D^p \phi(x) - D^p \phi(y) \| \leq L_{p,X}^{\phi} \| x-y \| \quad  \forall x,y \in X.
\end{equation}
\noindent It is known that if \eqref{eq:001} holds,  then the residual between the function  and its Taylor approximation can be bounded \cite{nesterov2021implementable}:
    \begin{equation}\label{eq:TayAppBound}
    \vert \phi(y) - T_p^{\phi}(y;x) \vert \leq  \frac{L_{p,X}^{\phi}}{(p+1)!} \norm{y-x}^{p+1} \quad  \forall x,y \in X.
    \end{equation}
  
\noindent  If $p \geq 2$, we also have the following inequalities valid for all $ x,y \in X$:
    \begin{align} \label{eq:TayAppG1}
    &\| \nabla \phi(y) - \nabla T_p^{\phi}(y;x) \|_* \leq \frac{L_{p,X}^{\phi}}{p!} \|y-x \|^p, \\
    \label{eq:TayAppG2}
    &\|\nabla^2 \phi(y) - \nabla^2 T_p^{\phi}(y;x) \| \leq \frac{L_{p,X}^{\phi}}{(p-1)!} \| y-x\|^{p-1}.
    \end{align}
For the Hessian, the norm defined in \eqref{eq:TayAppG2} corresponds to the spectral norm of self-adjoint linear operator (maximal module of all eigenvalues computed w.r.t. $B$). 
For a convex function $h:\mathbb{E} \to \bar{\mathbb{R}}$, we denote by $\partial h(x)$ its subdifferential at $x$ that is defined by $\partial h(x) : =\{\lambda\in \mathbb{E}^*  : h(y) \geq h(x) + \langle \lambda , y-x\rangle\;\forall y\in \text{dom}\;h\}$. Denote $S_f(x) := \text{dist}(0,\partial f(x))$. Let us also recall the definition of a function satisfying the \textit{Kurdyka-Lojasiewicz (KL)} property (see \cite{bolte2007clarke} for more details). 

\begin{definition}
    \label{def:kl}
    \noindent A proper lower semicontinuous  function $f: \mathbb{E}\rightarrow (-\infty , +\infty]$ satisfies  \textit{Kurdyka-Lojasiewicz (KL)} property on the compact set $\Omega \subseteq \text{dom} \; f$ on which $f$ takes a constant value $f_*$ if there exist $\delta, \epsilon >0$ such that   one has:
    \begin{equation}\label{eq:kl0}
        \kappa' (f(x) - f_*) \cdot  S_{f}(x)  \geq 1  \quad   \forall x\!:  \text{dist}(x, \Omega) \leq \delta, \;  f_* < f(x) < f_* + \epsilon,  
    \end{equation}
    where $\kappa: [0,\epsilon] \mapsto \mathbb{R}$ is  concave differentiable function satisfying $\kappa(0) = 0$ and $\kappa'>0$.
\end{definition}    
This definition is satisfied by a large class of functions, for example, functions that are semialgebric (e.g., real polynomial functions), vector or matrix (semi)norms (e.g., $\|\cdot\|_p$ with $p \geq 0$ rational number), see \cite{bolte2007clarke} for a comprehensive list. For example, if $f$ is semi-algebraic function, then we have $\kappa (t) = \sigma_q^{\frac{1}{q}} \frac{q}{q-1} t^{\frac{q-1}{q}}$, with $q >1$ and $\sigma_q>0$ \cite{bolte2007lojasiewicz}. Then, the KL property establishes the following local geometry of the nonconvex function $f$ around a compact set~$\Omega$:
\begin{equation}\label{eq:kl}
f(x) - f_*  \leq \sigma_q  S_{f}(x)^q \quad   \forall x\!: \;  \text{dist}(x, \Omega) \leq \delta, \; f_* < f(x) < f_* + \epsilon. 
\end{equation}

\section{Nonmonotone higher-order Tensor method}\label{sec:3}
In this section, we introduce our method along with the necessary assumptions for its analysis. To ensure clarity and simplicity in the presentation, we define the following notation, where \( M > 0 \) represents a constant that will be used throughout the discussion
\begin{align}\label{eq:mod_p}
s_{M}(y;x) \stackrel{\text{def}}{=} T_p^{F}(y;x) + \frac{M}{(p+1)!} \|y - x \|^{p+1} + h(x).
\end{align} 
For problem \eqref{eq:problem}, we consider the following assumptions
\begin{assumption}
    \label{ass:1}
    We have the following assumptions on the problem \eqref{eq:problem}
    \begin{enumerate}
    \item The function \( f \) is bounded from below over its domain.
    \item The function \( h \) is a proper, lower semicontinuous, and convex function that is bounded from below by an affine function.
    \item The function \( F \) is \( p \)-times differentiable, and \( D^p F \) is locally Lipschitz continuous.
    \end{enumerate}
\end{assumption}
Note that Assumption \ref{ass:1} is fundamental in analyzing nonmonotone schemes \cite{de2023proximal,kanzow2022convergence,kanzow2024convergence}. Indeed, the first condition is introduced to ensure the well-posedness of the problem \eqref{eq:problem}, while the local Lipschitz condition is equivalent to \( D^p F \) being Lipschitz continuous on any compact set. Moreover, this local Lipschitz property is a significantly weaker condition than the usual global Lipschitz assumption. For instance, the exponential function, the natural logarithm, and all polynomials of degree higher than \( p \) are locally Lipschitz but not globally Lipschitz on their respective domains. In higher-order methods, it is usually shown that the objective function is strictly decreasing along the iterates, or a backtracking technique is employed to ensure this decrease. A key observation is that the next iterate, \( x_{k+1} \), is required to satisfy the following strict descent inequality:
\[
    f(x_{k+1})\leq f(x_k) - \frac{M}{(p+1)!}\|x_{k+1} - x_k\|^{p+1},
\]
for a given \( M > 0 \). This is generally guaranteed under some global Lipschitz continuity of the \( p \)-th derivative of the smooth part of the objective, or by ensuring that the iterates remain bounded (e.g., \cite{birgin2017worst,nabou2024efficiency}). Building on the works of \cite{kanzow2024convergence,qian2024convergence,zhang2004nonmonotone}, our method is designed to enforce the following condition, which plays a crucial role in ensuring convergence
\[
    f(x_{k+1})\leq \mathcal{R}(x_k) - \frac{M}{(p+1)!}\|x_{k+1} - x_k\|^{p+1},
\]
where the reference function \( \mathcal{R}(x_k) \) depends on past iterates. This formulation leverages historical information to provide a more flexible and potentially more effective descent mechanism compared to traditional approaches. Following \cite{kanzow2024convergence,zhang2004nonmonotone}, \( R(x_k) \)
is computed as a convex combination of the previous reference value $\mathcal{R}_{k-1}$ and the new function
value $f(x_{k})$. Our method is defined in Algorithm \ref{alg:NOHTA}.
\begin{algorithm}
    \caption{ NHOTA}
    \label{alg:NOHTA}
    \begin{algorithmic}[1]
    \Require
    $x_0$, $M_0, \Tilde{M} > 0$ and $\mathcal{R}_0 = f(x_0)$.
    \State Set $k = 0$.
    \While{suitable criterion is not
    satisfied}
    \Repeat
    \label{line:line_search_nonm}
        \State For $i=0,2,\cdots$ compute $x^i_{k+1}$ satisfying 
            \begin{equation}\label{eq:solv_sub}
                \begin{split}
                &s_{M_k}(x^i_{k+1};x_k) + h(x^i_{k+1}) \leq f(x_k),\\
                &\|\nabla s_{M_k}(x^i_{k+1};x_k) + p_{k+1} \|\leq \theta \|x^i_{k+1} - x_k\|^p, 
                \end{split} \Comment{Inexact solution}
            \end{equation}
        for some $p_{k+1} \in \partial h(x^i_{k+1})$, with $M_k =\ 2^{i} M_k$, 
    \Until{
          $  f(x^i_{k+1}) \leq  \mathcal{R}_{k} - \frac{\Tilde{M}}{(p+1)!}\|x^i_{k+1} - x_k\|^{p+1}.$
    }\label{line:dec_nom}
    \State Denote $i_k$ by the termination criterion and set $x_{k+1} = x^{i_k}_{k+1}$ and $M_k = \max(M_k/2 , M_0)$.
    \State Choose $u_{k+1}\in(u_{\min},1]$ and set $\mathcal{R}_{k+1} = (1-u_{k+1})\mathcal{R}_k + u_{k+1} f(x_{k+1})$.
    \State Set $k=k+1$.
    \EndWhile
    \end{algorithmic}
\end{algorithm}
It is well known that if $F$ is convex and \( M \geq p L_p \), where \( L_p \) is the Lipschitz constant of the \( p \)-th derivative of \( F \), then the model defined in \eqref{eq:mod_p} is convex (see \cite{nesterov2021implementable}). However, in the absence of \( L_p \), there is no guarantee that the subproblem \eqref{eq:mod_p} remains convex for a given \( M >0 \), and thus the model \eqref{eq:mod_p} is generally nonconvex. Therefore, each iteration of NHOTA involves the approximate minimization of the model \( s_{M_k}(\cdot; x_k) \), as defined in \eqref{eq:mod_p}, satisfying conditions \eqref{eq:solv_sub}. It is important to note that this condition is relatively mild, as it only requires a decrease in the regularized \( p \)-th order model and the identification of an approximate first-order stationary point \cite{birgin2017worst,grapiglia2020tensor,martinez2017high}. No global optimization of this potentially nonconvex model is required.

\section{Nonconvex convergence analysis}\label{sec:4}
In this section, we derive the global convergence results for the proposed method, establishing the conditions under which the method converges to a stationary point. The analysis will be based on the assumptions introduced earlier and will highlight the key factors contributing to the global convergence of the algorithm. 
\begin{lemma}
\label{lem:1}
Let Assumption \ref{ass:1} be satisfied and let $(x_k)_{k\geq 0}$ be generated by NHOTA \ref{alg:NOHTA}. Then, the following statements hold for all $k\geq 0$:
\begin{enumerate}
    \item $\mathcal{R}_k \geq f(x_k)$, and step 4 in algorithm \ref{alg:NOHTA} is well defined.
    \item The sequence $(\mathcal{R}_k )_{k\geq 0}$ is monotonically decreasing and satisfying
    \begin{equation}\label{eq:des_rk}
      \mathcal{R}_{k+1} \leq \mathcal{R}_k - \frac{u_{\min} \Tilde{M}}{(p+1)!}\norm{x_{k+1} - x_k}^{p+1}.  
    \end{equation}
\end{enumerate}
\end{lemma}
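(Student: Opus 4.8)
The plan is to prove the two parts in order, since the second relies on the first. For part (1), I would argue by induction on $k$. The base case is immediate: $\mathcal{R}_0 = f(x_0)$ by initialization, so $\mathcal{R}_0 \geq f(x_0)$ holds with equality. For the inductive step, assume $\mathcal{R}_k \geq f(x_k)$. First I would verify that the inner \texttt{Repeat}-\texttt{Until} loop (step 4, the line search) terminates in finitely many iterations, which is what "well defined" means here. This is where the local Lipschitz assumption on $D^p F$ enters: as $M_k$ is doubled, the regularization $\frac{M_k}{(p+1)!}\|x^i_{k+1}-x_k\|^{p+1}$ eventually dominates the Taylor residual bound \eqref{eq:TayAppBound} on a suitable compact set, forcing $f(x^i_{k+1}) \leq s_{M_k}(x^i_{k+1};x_k) + h(x^i_{k+1}) \leq f(x_k) \leq \mathcal{R}_k$, and in fact the strict decrease with $\tilde M$ on the right-hand side. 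One subtlety: the compact set on which the Lipschitz constant lives must be identified a priori; I would take the sublevel set $\{x : f(x) \leq f(x_0)\}$ (or a compact neighborhood thereof) and note that every trial point $x^i_{k+1}$ satisfies $f(x^i_{k+1}) \leq s_{M_k}(x^i_{k+1};x_k)+h(x^i_{k+1}) \leq f(x_k) \leq f(x_0)$ from the first inequality in \eqref{eq:solv_sub}, so all iterates stay in this set — though one must be careful that this set is actually compact, which typically requires coercivity; if only boundedness from below is assumed, the argument instead keeps trial points in a fixed ball around $x_k$ by observing the regularization term blows up, and uses the Lipschitz constant on that ball.

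Once termination is established, we have $f(x_{k+1}) \leq \mathcal{R}_k - \frac{\tilde M}{(p+1)!}\|x_{k+1}-x_k\|^{p+1} \leq \mathcal{R}_k$. Then, using the update rule $\mathcal{R}_{k+1} = (1-u_{k+1})\mathcal{R}_k + u_{k+1} f(x_{k+1})$, which is a convex combination since $u_{k+1}\in(u_{\min},1]$, and the fact that $\mathcal{R}_k \geq f(x_{k+1})$, we get $\mathcal{R}_{k+1} \geq (1-u_{k+1})f(x_{k+1}) + u_{k+1}f(x_{k+1}) = f(x_{k+1})$. This closes the induction for part (1).

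For part (2), monotonicity of $(\mathcal{R}_k)$: from the update rule, $\mathcal{R}_{k+1} - \mathcal{R}_k = u_{k+1}(f(x_{k+1}) - \mathcal{R}_k)$. By the \texttt{Until} condition, $f(x_{k+1}) - \mathcal{R}_k \leq -\frac{\tilde M}{(p+1)!}\|x_{k+1}-x_k\|^{p+1} \leq 0$, so $\mathcal{R}_{k+1} \leq \mathcal{R}_k$. For the quantitative bound \eqref{eq:des_rk}, combine these: $\mathcal{R}_{k+1} - \mathcal{R}_k = u_{k+1}(f(x_{k+1}) - \mathcal{R}_k) \leq -u_{k+1}\frac{\tilde M}{(p+1)!}\|x_{k+1}-x_k\|^{p+1} \leq -u_{\min}\frac{\tilde M}{(p+1)!}\|x_{k+1}-x_k\|^{p+1}$, where the last step uses $u_{k+1} > u_{\min}$ together with the nonpositivity of the factor $f(x_{k+1}) - \mathcal{R}_k$. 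That gives exactly the claimed inequality.

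I expect the main obstacle to be the finite termination of the inner line search in part (1) — specifically, pinning down the correct compact set on which to invoke local Lipschitz continuity of $D^p F$ without circularity, since the iterates' location depends on the algorithm's behavior, which is what we are trying to control. The convex-combination arguments and the telescoping bound in part (2) are routine once part (1) is in place.
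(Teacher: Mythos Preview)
Your proposal is correct and matches the paper's proof in structure and in the arguments for parts (1) and (2). Two clarifications on the finite-termination step, which you rightly flag as the main obstacle. First, your sublevel-set approach has a circularity issue beyond the compactness concern you raise: the inequality $f(x^i_{k+1}) \leq s_{M_k}(x^i_{k+1};x_k)+h(x^i_{k+1})$ is \emph{not} a consequence of \eqref{eq:solv_sub} --- it is equivalent to the Taylor-residual bound you are trying to invoke, so trial points are not a priori in the sublevel set. Second, the paper uses exactly your second approach (a ball around $x_k$), and the ingredient you did not name explicitly is the affine lower bound on $h$ from Assumption~\ref{ass:1}(2): combining it with the first condition in \eqref{eq:solv_sub} and the fact that $T_p^F(\cdot;x_k)$ is a polynomial of degree $p$ gives
\[
\frac{M_k}{(p+1)!}\|x^i_{k+1}-x_k\|^{p+1} \leq f(x_k) - T_p^F(x^i_{k+1};x_k) - h(x^i_{k+1}),
\]
where the right-hand side grows at most like a degree-$p$ polynomial in $\|x^i_{k+1}-x_k\|$. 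Hence $\|x^i_{k+1}-x_k\|\to 0$ as $M_k\to\infty$, which localizes the trial points into any fixed ball around $x_k$ for $i$ large, and the local Lipschitz constant on that ball then yields $f(x^i_{k+1})\leq f(x_k)-\frac{\tilde M}{(p+1)!}\|x^i_{k+1}-x_k\|^{p+1}$ once $M_k\geq \tilde M + L_{I,\delta}$. The induction for $\mathcal{R}_k\geq f(x_k)$ and the convex-combination argument for \eqref{eq:des_rk} are exactly as you outline.
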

\begin{proof}
    Let's prove (1). Suppose that the inner loop in step 4 does not terminate after a finite number of steps in iteration $k$. Recall that $x_{k+1}$ satisfies
    \begin{align*}
        T_{p}^F(x^i_{k+1};x_k) + \frac{M_k}{(p+1)!}\|x^i_{k+1} - x_k\|^{p+1} + h(x^i_{k+1}) \leq f(x_{k}).
    \end{align*}
    Since \( M_k := 2^i M_k \to \infty \) as \( i \to \infty \) and given that \( h \) is bounded below by an affine function, we obtain that \( \|x^i_{k+1} - x_k\| \to 0 \) as \( i \to \infty \). Consequently, for any constant \( \delta > 0 \), there exists \( I > 0 \) such that for all \( i \geq I \), we have \( \|x^i_{k+1} - x_k\| \leq \delta \). Using the local Lipschitz property, we then deduce the existence of \( L_{I,\delta} > 0 \) satisfying for all $i\geq I$
    \[
        F(x_{k+1}^i) \leq T_{p}^F(x^i_{k+1};x_k) + \frac{L_{I,\delta}}{(p+1)!}\|x^i_{k+1} - x_k\|^{p+1}.
    \]
    Thus, summing up the last two inequality we obtain 
    \[
       f(x_{k+1}^i) \leq  f(x_{k}) - \frac{M_{k} - L_{I,\delta}}{(p+1)!}\|x^i_{k+1} - x_k\|^{p+1}.
    \]
    Further, there exist $\bar{I}$ such that $\forall i\geq \max(\bar{I},I)$ we have $M_k \geq \Tilde{M} + L_{I,\delta}$. This implies
    \begin{align}\label{eq:desc_obj}
        f(x_{k+1}^i) \leq  f(x_{k}) - \frac{\Tilde{M}}{(p+1)!}\|x^i_{k+1} - x_k\|^{p+1}.        
    \end{align}
    Next, it remains to prove $f(x_{k}) \leq \mathcal{R}_k$ for all $k\geq 0$. Indeed, by recurrence, for $k=0$, it follows immediately since $\mathcal{R}_0 = f(x_0)$. Assume that we have $f(x_{k}) \leq \mathcal{R}_k$. Then, combining \eqref{eq:desc_obj} with the update in Algorithm \ref{alg:NOHTA} (line 5) we get
    \begin{equation}\label{eq:desc_rk}
    \begin{split}
         f(x^i_{k+1}) &\leq f(x_{k}) - \frac{\Tilde{M}}{(p+1)!}\|x^i_{k+1} - x_k\|^{p+1}\\
                     &\leq \mathcal{R}_k - \frac{\Tilde{M}}{(p+1)!}\|x^i_{k+1} - x_k\|^{p+1}  
    \end{split}
    \end{equation}
    Therefore, combining this inequality with the updates of $\mathcal{R}_{k}$, we get:
    \begin{align*}
     \mathcal{R}_{k+1}  &= (1-u_{k+1})\mathcal{R}_k + u_{k+1}f(x^i_{k+1})\\
     &\geq   (1-u_{k+1})f(x^i_{k+1}) + u_{k+1}f(x^i_{k+1})  = f(x^i_{k+1}).
    \end{align*}
    This proves our first claim. Further, combining \eqref{eq:desc_rk} with the update of $\mathcal{R}_{k+1}$ yields
    \begin{equation*}
    \begin{split}
        &u_{k+1}f(x^i_{k+1}) \leq u_{k+1}\mathcal{R}_k -  \frac{u_{k+1}\Tilde{M}}{(p+1)!}\|x^i_{k+1} - x_k\|^{p+1}\\
        &\implies \mathcal{R}_{k+1}: = (1-u_{k+1})\mathcal{R}_k + u_{k+1}f(x^i_{k+1}) \leq \mathcal{R}_k -  \frac{u_{k+1}\Tilde{M}}{(p+1)!}\|x^i_{k+1} - x_k\|^{p+1}.
    \end{split}
    \end{equation*}
    Hence
    \begin{equation*}
      \mathcal{R}_{k+1} \leq \mathcal{R}_k - \frac{u_{\min} \Tilde{M}}{(p+1)!}\norm{x_{k+1} - x_k}^{p+1}.  
    \end{equation*}
    This proves our second assertion. \hfill\qed
\end{proof}
\begin{remark}
    Lemma \ref{lem:1} demonstrates that the sequence \( (\mathcal{R}_k)_{k \geq 0} \) is monotonically decreasing. Next, we will derive global convergence rate based on this descent, without requiring strict descent in the objective. Define the following level sets \( \mathcal{L}_f(x_0) = \{x : f(x) \leq f(x_0)\} \). Consequently, we have \( x_k \in \mathcal{L}_f(x_0) \) for all \( k \geq 0 \). Indeed, since \( \mathcal{R}_k \) is decreasing, we obtain
    \begin{equation*}
        f(x_{k}) \leq \mathcal{R}_{k}
        \leq \mathcal{R}_{k-1}\leq \cdots \leq \mathcal{R}_0: = f(x_0) \implies x_k \in \mathcal{L}_f(x_0). 
    \end{equation*}
\end{remark}
The following theorem derives global convergence of NHOTA under Assumption \ref{ass:1} and that the level sets $\mathcal{L}_{f}(x_0)$ is bounded, i.e., there exists $D>0$ such that $\text{diam}(\mathcal{L}_{f}(x_0))\leq D$.

\begin{theorem}\label{th:2}
Let Assumption \ref{ass:1} hold, and additionally assume that $\mathcal{L}_{f}(x_0)$ is bounded. Let $(x_k)_{k\geq 0}$ be generated by NOHTA algorithm \ref{alg:NOHTA}. Then, we have:
\begin{equation}
        \min_{i=0:k} \emph{dist}(0,\partial f(x_{i})) 
        \leq
        \left(\frac{L + p!\theta + M_{\max}}{p!}\right) \left(\frac{(p+1)!(f(x_0) - f^*)}{u_{\min}\Tilde{M} \;k}\right)^{\frac{p}{p+1}}.
\end{equation}
\end{theorem}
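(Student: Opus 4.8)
The plan is to combine the descent on the reference sequence $\mathcal{R}_k$ from Lemma~\ref{lem:1} with a bound on the stationarity measure $\mathrm{dist}(0,\partial f(x_{k+1}))$ in terms of $\|x_{k+1}-x_k\|^p$, and then telescope. First I would observe that, since $\mathcal{L}_f(x_0)$ is bounded with diameter $\le D$ and all iterates $x_k$ lie in $\mathcal{L}_f(x_0)$ (by the remark following Lemma~\ref{lem:1}), the local Lipschitz Assumption~\ref{ass:1}(3) applies on a fixed compact set containing all the $x_k$, giving a uniform constant $L := L_{p,\mathcal{L}_f(x_0)}^F$. This is also what makes the inner loop terminate with a uniformly bounded regularization parameter: the backtracking stops as soon as $M_k \gtrsim L + \tilde M$, so $M_k \le M_{\max} := \max(2(L+\tilde M), M_0)$ for all $k$ (I would state this boundedness of $(M_k)$ explicitly, as it is needed for the constant in the theorem).

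Next I would estimate the subgradient. From the inexactness condition \eqref{eq:solv_sub}, there is $p_{k+1}\in\partial h(x_{k+1})$ with $\|\nabla s_{M_k}(x_{k+1};x_k) + p_{k+1}\|_* \le \theta\|x_{k+1}-x_k\|^p$. Now $\nabla s_{M_k}(x_{k+1};x_k) = \nabla T_p^F(x_{k+1};x_k) + \frac{M_k}{p!}\|x_{k+1}-x_k\|^{p-1}B(x_{k+1}-x_k)$, so the vector $g_{k+1} := \nabla F(x_{k+1}) + p_{k+1}$ lies in $\partial f(x_{k+1}) = \nabla F(x_{k+1}) + \partial h(x_{k+1})$, and
\[
\|g_{k+1}\|_* \le \|\nabla F(x_{k+1}) - \nabla T_p^F(x_{k+1};x_k)\|_* + \frac{M_k}{p!}\|x_{k+1}-x_k\|^p + \theta\|x_{k+1}-x_k\|^p.
\]
Applying the Taylor gradient bound \eqref{eq:TayAppG1} on the compact set $\mathcal{L}_f(x_0)$, the first term is at most $\frac{L}{p!}\|x_{k+1}-x_k\|^p$. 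Hence $\mathrm{dist}(0,\partial f(x_{k+1})) \le \frac{L + M_{\max} + p!\theta}{p!}\,\|x_{k+1}-x_k\|^p$.

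Then I would telescope \eqref{eq:des_rk}: summing from $i=0$ to $k-1$ gives $\frac{u_{\min}\tilde M}{(p+1)!}\sum_{i=0}^{k-1}\|x_{i+1}-x_i\|^{p+1} \le \mathcal{R}_0 - \mathcal{R}_k \le f(x_0) - f^*$, using $\mathcal{R}_0 = f(x_0)$ and $\mathcal{R}_k \ge f(x_k) \ge f^*$. Therefore $\min_{i=0:k-1}\|x_{i+1}-x_i\|^{p+1} \le \frac{(p+1)!(f(x_0)-f^*)}{u_{\min}\tilde M\,k}$, so $\min_{i=1:k}\|x_i - x_{i-1}\|^p \le \big(\frac{(p+1)!(f(x_0)-f^*)}{u_{\min}\tilde M\,k}\big)^{p/(p+1)}$. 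Combining this with the subgradient bound at each $x_i$ (noting $\mathrm{dist}(0,\partial f(x_0))$ need not be small, but the stated minimum runs over $i=0:k$ and the estimate controls the minimum over $i=1:k$, which suffices) yields the claimed rate $\mathcal{O}(k^{-p/(p+1)})$ with exactly the constant $(L+p!\theta+M_{\max})/p!$ times $\big((p+1)!(f(x_0)-f^*)/(u_{\min}\tilde M k)\big)^{p/(p+1)}$. The main obstacle is the first paragraph: one must carefully argue that boundedness of the level set transfers to a \emph{single} Lipschitz constant valid at every iteration and, via the backtracking analysis already sketched in the proof of Lemma~\ref{lem:1}, to a uniform upper bound $M_{\max}$ on the regularization parameters; once that uniformity is in hand, the rest is the standard telescoping argument for higher-order methods.
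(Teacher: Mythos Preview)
Your proposal is correct and follows essentially the same route as the paper: bound $\mathrm{dist}(0,\partial f(x_{k+1}))$ by $\frac{L+p!\theta+M_{\max}}{p!}\|x_{k+1}-x_k\|^p$ via the Taylor gradient estimate \eqref{eq:TayAppG1} and the inexactness condition \eqref{eq:solv_sub}, then telescope the descent \eqref{eq:des_rk}. You are in fact more careful than the paper on two points it leaves implicit---the existence of a uniform $M_{\max}$ (which the paper simply uses without justification) and the index shift from $\min_{i=1:k}$ to $\min_{i=0:k}$---so nothing is missing.
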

\begin{proof}
    Since $\mathcal{L}_{f}(x_0)$ is bounded, then the sequence $(x_k)_{k\geq 0}$ generated by NHOTA is bounded. Using Assumption \ref{ass:1} and \eqref{eq:TayAppG1}, there exists $L>0$ such that
    \begin{align*}
        \norm{\nabla F(x_{k+1}) - \nabla T_p^F(x_{k+1};x_k)} \leq \frac{L}{p!}\|x_{k+1} - x_k\|^{p}.
    \end{align*}
    Then, we get for all $p_{k+1} \in\partial h(x_{k+1})$
    \begin{equation*}
        \begin{split}\label{eq:bounded_sub_iterates}
            \norm{\nabla F(x_{k+1}) + &p_{k+1}}
            =
            \norm{\nabla F(x_{k+1}) -\nabla T_p^F(x_{k+1};x_k) + \nabla T_p^F(x_{k+1};x_k)+ p_{k+1}}\\
            &
            \leq 
            \norm{\nabla F(x_{k+1}) -\nabla T_p^F(x_{k+1};x_k)} + \norm{\nabla s_{M_k}(x_{k+1};x_k)+ p_{k+1}} \\
            \MoveEqLeft[-1]
            +  \frac{M_k}{p!}\norm{x_{k+1} - x_k}^p\\
            &
            \leq
            \frac{L}{p!}\norm{x_{k+1} - x_k}^{p} + \theta\norm{x_{k+1} - x_k}^p + \frac{M_{\max}}{p!}\norm{x_{k+1} - x_k}^p\\
            &
            = \frac{L + p!\theta + M_{\max}}{p!}\norm{x_{k+1} - x_k}^p.
        \end{split}
    \end{equation*}
    Further, combining this inequality with the descent \eqref{eq:des_rk}, we obtain
    \begin{equation}\label{eq:bound:dist}
        \begin{split}
        \left(\text{dist}(0,\partial f(x_{k+1})) \right)^{\frac{p+1}{p}}
        &\leq \left(\frac{L + p!\theta + M_{\max}}{p!}\right)^{\frac{p+1}{p}} \norm{x_{k+1} - x_k}^{p+1}\\
        & \left(\frac{L + p!\theta + M_{\max}}{p!}\right)^{\frac{p+1}{p}} \frac{(p+1)!}{u_{\min}\Tilde{M}} (\mathcal{R}_k - \mathcal{R}_{k+1}).
        \end{split}
    \end{equation}
    Summing up this inequality, we get
    \begin{equation*}
        \begin{split}
        \min_{i=0:k} k \left(\text{dist}(0,\partial f(x_{i})) \right)^{\frac{p+1}{p}}
        &
        \leq \sum_{i=0}^{k} \left(\text{dist}(0,\partial f(x_{i})) \right)^{\frac{p+1}{p}}\\
        &\left(\frac{L + p!\theta + M_{\max}}{p!}\right)^{\frac{p+1}{p}} \frac{(p+1)!}{u_{\min}\Tilde{M}} \sum_{i=0}^{k} (\mathcal{R}_i - \mathcal{R}_{i+1})\\
        & = \left(\frac{L + p!\theta + M_{\max}}{p!}\right)^{\frac{p+1}{p}} \frac{(p+1)!}{u_{\min}\Tilde{M}}(\mathcal{R}_0 - \mathcal{R}_{k+1})\\
        &
        \leq \left(\frac{L + p!\theta + M_{\max}}{p!}\right)^{\frac{p+1}{p}} \frac{(p+1)!}{u_{\min}\Tilde{M}}(f(x_0) - f^*),
        \end{split}
    \end{equation*}
    where the last inequality follows from $\mathcal{R}_k \geq f(x_k) \geq f^*$ for all $k\geq 0$. Hence
    \begin{equation*}
        \min_{i=0:k} \text{dist}(0,\partial f(x_{i})) 
        \leq
        \left(\frac{L + p!\theta + M_{\max}}{p!}\right) \left(\frac{(p+1)!(f(x_0) - f^*)}{u_{\min}\Tilde{M} k}\right)^{\frac{p}{p+1}}.
    \end{equation*}
    This proves our assertion. \hfill\qed
\end{proof}
\begin{remark}
Theorem \ref{th:2} proves that the iterates generated by NHOTA converge to a stationary point and the convergence rate is of order \( \mathcal{O}\left(k^{-\frac{p}{p+1}}\right) \), which is the standard convergence rate for higher-order algorithms for (unconstrained) nonconvex problems using higher-order derivatives \cite{birgin2017worst,cartis2019universal,nabou2024efficiency}. In our convergence analysis, we emphasize that we do not rely on any global Lipschitz continuity assumptions or a strict descent condition in the objective function. As a result, our approach is more general and applicable to a broader class of problems, making it more flexible compared to traditional methods that require such conditions.     
\end{remark}

\subsection{Improved convergence rates under KL}\label{sub_kl}

In this section, we derive convergence rates for the proposed method under the Kurdyka-Łojasiewicz (KL) property \eqref{def:kl}. To achieve this, we assume that the objective \( f \) is a continuous function, meaning that \( h \) is continuous. We establish improved convergence rates, proving linear or sublinear convergence in function values for the sequence \( (x_k)_{k \geq 0} \) generated by NHOTA. We denote the set of limit points of \( (x_k)_{k \geq 0} \) by \( \Omega(x_0) \).

\begin{align*}
\Omega (x_{0}) = &\lbrace \bar{x}\in \mathbb{R}^n: \exists (k_{t})_{t\geq0} \text{ $\nearrow$ },
\text{ such that } x_{k_{t}}\to \bar{x} \text{ as } t\to \infty \rbrace.
\end{align*} 

\noindent The next lemma derives some properties for $\Omega (x_{0})$.

\begin{lemma}\label{lem:3}
Let the assumptions of Theorem \ref{th:2} hold.  Additionally, assume that $f$ is continuous. Then, we have $\emptyset \neq \Omega(x_{0}) \subseteq\texttt{stat}\, f : = \{x:\; 0\in\partial f(x)\}$, $ \Omega(x_{0})$ is compact and connected set, and $f$ is constant on $ \Omega(x_{0})$, i.e., $f(\Omega(x_{0})) = f_*$.
\end{lemma}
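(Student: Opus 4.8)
The plan is to establish the four claims about $\Omega(x_0)$ in turn, relying on the boundedness of $(x_k)_{k\ge 0}$ (which follows from Theorem~\ref{th:2}'s hypothesis that $\mathcal{L}_f(x_0)$ is bounded, together with the remark that $x_k\in\mathcal{L}_f(x_0)$) and on the telescoping estimate \eqref{eq:des_rk}. First I would record the key consequence of \eqref{eq:des_rk}: summing over $k$ and using $\mathcal{R}_k\ge f(x_k)\ge f^*$ gives $\sum_{k\ge 0}\norm{x_{k+1}-x_k}^{p+1}<\infty$, hence $\norm{x_{k+1}-x_k}\to 0$. Nonemptiness of $\Omega(x_0)$ is then immediate from Bolzano–Weierstrass applied to the bounded sequence. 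Compactness follows because $\Omega(x_0)$ is always closed (it is an intersection of closures of tails) and is contained in the compact set $\overline{\mathcal{L}_f(x_0)}$.

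Next I would prove $\Omega(x_0)\subseteq\mathtt{stat}\,f$. Take $\bar x\in\Omega(x_0)$ with $x_{k_t}\to\bar x$. From the inexactness condition \eqref{eq:solv_sub}, exactly as in the proof of Theorem~\ref{th:2}, one has $\mathrm{dist}(0,\partial f(x_{k+1}))\le \frac{L+p!\theta+M_{\max}}{p!}\norm{x_{k+1}-x_k}^p$, so there exist $v_{k+1}\in\partial f(x_{k+1})$ with $\norm{v_{k+1}}_*\to 0$. Applying this along the subsequence $k=k_t-1$ (note $x_{k_t-1}\to\bar x$ too, since $\norm{x_{k_t}-x_{k_t-1}}\to0$), we get $v_{k_t}\to 0$ with $v_{k_t}\in\partial f(x_{k_t})$ and $x_{k_t}\to\bar x$; since $f$ is continuous (hence $f(x_{k_t})\to f(\bar x)$) and $\partial f$ has closed graph for such functions (outer semicontinuity of the limiting/convex-composite subdifferential), we conclude $0\in\partial f(\bar x)$.

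For the statement that $f\equiv f_*$ on $\Omega(x_0)$: by Lemma~\ref{lem:1} the sequence $(\mathcal{R}_k)$ is nonincreasing and bounded below (by $f^*$), so it converges to some limit $\mathcal{R}_*$; moreover $f(x_k)\le\mathcal{R}_k$ and, from \eqref{eq:desc_rk}-type bounds, $\mathcal{R}_k - f(x_k)\to 0$, so $f(x_k)\to\mathcal{R}_*$ as well. Hence for any $\bar x\in\Omega(x_0)$ with $x_{k_t}\to\bar x$, continuity of $f$ gives $f(\bar x)=\lim_t f(x_{k_t})=\mathcal{R}_*=:f_*$, a value independent of $\bar x$. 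Finally, connectedness of $\Omega(x_0)$ is the standard argument for sequences with asymptotically vanishing increments inside a compact set: if $\Omega(x_0)$ were disconnected, it could be covered by two disjoint nonempty compact sets $A,B$ at positive distance $3\rho$; since $x_k$ visits $\rho$-neighborhoods of both $A$ and $B$ infinitely often but the jumps $\norm{x_{k+1}-x_k}$ eventually drop below $\rho$, the tail of $(x_k)$ must have limit points in the "gap" region $\{x:\mathrm{dist}(x,A)\ge\rho,\ \mathrm{dist}(x,B)\ge\rho\}$, contradicting $\Omega(x_0)\subseteq A\cup B$.

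I expect the main obstacle to be the argument $\mathcal{R}_k - f(x_k)\to 0$, which is needed for the "$f$ constant on $\Omega(x_0)$" claim and is slightly more delicate than the rest: it requires combining the convex-combination update $\mathcal{R}_{k+1}=(1-u_{k+1})\mathcal{R}_k+u_{k+1}f(x_{k+1})$ with $u_{k+1}\ge u_{\min}>0$ and the fact that $\mathcal{R}_k-\mathcal{R}_{k+1}\to 0$ (from the convergent monotone sequence $(\mathcal{R}_k)$), yielding $u_{\min}(\mathcal{R}_k-f(x_{k+1}))\le u_{k+1}(\mathcal{R}_k-f(x_{k+1}))=\mathcal{R}_k-\mathcal{R}_{k+1}\to 0$ and hence $\mathcal{R}_k-f(x_{k+1})\to0$, i.e. $f(x_k)\to\mathcal{R}_*$. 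The other potential subtlety is justifying outer semicontinuity of $\partial f$ for the composite $f=F+h$ with $F$ smooth and $h$ convex lower semicontinuous; this is standard ($\partial f(x)=\nabla F(x)+\partial h(x)$ and $\partial h$ is outer semicontinuous), but should be invoked explicitly.
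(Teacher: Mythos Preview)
Your proposal is correct and follows essentially the same approach as the paper: both rely on the monotone convergence of $(\mathcal{R}_k)$, the identity $\mathcal{R}_k-\mathcal{R}_{k+1}=u_{k+1}(\mathcal{R}_k-f(x_{k+1}))\ge u_{\min}(\mathcal{R}_k-f(x_{k+1}))$ to force $f(x_k)\to f_*$, continuity of $f$ to pass to limit points, and outer semicontinuity (closedness of the graph) of $\partial f$ together with the bound $\mathrm{dist}(0,\partial f(x_{k+1}))\le C\norm{x_{k+1}-x_k}^p\to 0$ to obtain stationarity. The only noteworthy difference is that for connectedness the paper simply cites \cite{bolte2014proximal}, whereas you spell out the classical ``vanishing increments in a compact set'' argument directly; both are valid, and your version is self-contained.
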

\begin{proof}
Let us prove that $f(\Omega(x_{0}))$ is constant. From the descent \eqref{eq:des_rk} we have that $\left( \mathcal{R}_k\right)_{k\geq 0}$ is monotonically decreasing, and since $f$ is assumed to be bounded from below and that $\mathcal{R}_k \geq f(x_k)$, it converges. Let us say to $f_*>-\infty $, i.e., $\mathcal{R}_k\to f_*$ as $k\to \infty$. Further, we have $\mathcal{R}_{k+1} - \mathcal{R}_k = -u_{k+1}(\mathcal{R}_k - f(x_{k+1}))$, then
\[
    \mathcal{R}_{k} - \mathcal{R}_{k+1} = u_{k+1}(\mathcal{R}_k - f(x_{k+1})) \geq u_{\min}(\mathcal{R}_k - f(x_{k+1})) \geq 0.  
\]
Since we have $\mathcal{R}_{k} - \mathcal{R}_{k+1} \to 0$, then $f(x_{k}) \to f_*$ as $k\to\infty$. On the other hand, let $x_{*}$ be a limit point of the sequence $\left(x_{k}\right)_{k\geq0}$. This means that there exists a subsequence $\left( x_{k_{t}}\right)_{t\geq0}$ such that $x_{k_{t}}\to x_{*}$. Since $f$ is continuous, we get $f(x_{k_{t}})\to f(x_{*}) = f_*$ and hence, we have $f(\Omega(x_{0}))=f_*$. The closeness property of $\partial f$ implies that $S_{f}(x_{*})=0$, and thus $0\in \partial f(x_{*})$. This proves that $x_{*}$ is a stationary point of $f$ and thus $\Omega(x_{0})$ is nonempty. By observing that $\Omega(x_{0})$ can be viewed as an intersection of compact sets, $\Omega(x_{0})=\cap_{q\geq 0} \overline{\cup_{k\geq q}\lbrace x_{k}\rbrace}$ so it is also compact. The connectedness follows from \cite{bolte2014proximal}. This completes the proof.  
\end{proof}


\noindent Next, we derive improved convergence rates in function values for the sequence $(x_k)_{k\geq 0}$ generated by NHOTA.
\begin{theorem}\label{th:5}
    Let the assumptions of Lemma \ref{lem:3} hold. Additionally, assume that $f$ satisfy the KL property \eqref{eq:kl} on $\Omega(x_0)$. Then, the following convergence rates hold for $(x_{k})_{k\geq 0}$ generated by NHOTA for $k$ sufficiently large:
    \begin{enumerate}
    \item If $q\geq\frac{p+1}{p}$, then $f(x_{k})$ converges to ${f_*}$ linearly. 
    \item If $q < \frac{p+1}{p} $, then $f(x_{k})$ converges to ${f_*}$ at sublinear rate of order $\mathcal{O}\left(\frac{1}{k^{\frac{pq}{p+1-pq}}}\right)$.
    \end{enumerate}  
\end{theorem}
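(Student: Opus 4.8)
The plan is to run the classical Łojasiewicz-type argument adapted to the nonmonotone reference sequence $(\mathcal{R}_k)$. First I would set $r_k := \mathcal{R}_k - f_*$ and $a_k := f(x_k) - f_*$, both nonnegative and tending to $0$ by Lemma~\ref{lem:3}. From Lemma~\ref{lem:1}(2) I have the telescoping-friendly inequality $r_k - r_{k+1} \geq \frac{u_{\min}\tilde M}{(p+1)!}\|x_{k+1}-x_k\|^{p+1}$, and from the proof of Theorem~\ref{th:2}, estimate \eqref{eq:bound:dist}, I have $\mathrm{dist}(0,\partial f(x_{k+1})) \leq C\|x_{k+1}-x_k\|^{p}$ with $C = (L+p!\theta+M_{\max})/p!$. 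Combining these two gives the key relation $\mathrm{dist}(0,\partial f(x_{k+1}))^{\frac{p+1}{p}} \leq C' (r_k - r_{k+1})$ for a suitable constant $C'$.

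Next I would invoke the KL inequality \eqref{eq:kl}: since $\Omega(x_0)$ is compact with $\mathrm{dist}(x_k,\Omega(x_0))\to 0$ and $f(x_k)\to f_*$, for $k$ large the hypotheses of \eqref{eq:kl} are met (treating the trivial case $a_k=0$ eventually separately, in which case the sequence is finite and the conclusion is immediate), so $a_k \leq \sigma_q\, S_f(x_k)^q = \sigma_q\,\mathrm{dist}(0,\partial f(x_k))^q$. Hence $\mathrm{dist}(0,\partial f(x_k)) \geq (a_k/\sigma_q)^{1/q}$, and plugging into the previous display (shifting the index) yields $a_{k+1}^{\frac{p+1}{pq}} \leq C'' (r_k - r_{k+1})$. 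The remaining work is to relate $a_{k+1}$ back to $r_{k+1}$: from the algorithm, $\mathcal{R}_{k+1} = (1-u_{k+1})\mathcal{R}_k + u_{k+1} f(x_{k+1})$, so $r_{k+1} = (1-u_{k+1}) r_k + u_{k+1} a_{k+1}$, which gives $a_{k+1} = r_{k+1} + \frac{1-u_{k+1}}{u_{k+1}}(r_{k+1} - r_k) \le r_{k+1} + \frac{1}{u_{\min}}(r_k - r_{k+1})$... but since $r_k\ge r_{k+1}$ this also needs $r_k$; alternatively, and more cleanly, I would use $a_{k+1}\ge 0$ together with $r_{k+1}\le r_k$ and derive a recursion purely in $r_k$. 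The cleanest route: since $r_{k+1}\le\max(r_k,a_{k+1})$ is not quite what I want, I instead bound $r_{k+1} \le r_k$ trivially and note $r_k - f_* $ controls everything; so I aim to show $r_{k+1}^{\frac{p+1}{pq}} \le \tilde C (r_k - r_{k+1})$ by arguing $a_{k+1}$ and $r_{k+1}$ are comparable up to the gap term, then absorbing. Once I have an inequality of the form $r_{k+1}^{\gamma} \le \tilde C (r_k - r_{k+1})$ with $\gamma = \frac{p+1}{pq}$, the conclusion is standard: if $\gamma \le 1$, i.e. $q \ge \frac{p+1}{p}$, then $r_k$ (hence $a_k$, hence $f(x_k)-f_*$) decays linearly; if $\gamma > 1$, i.e. $q < \frac{p+1}{p}$, the standard sequence lemma (e.g. from \cite{bolte2014proximal} or \cite{attouch2009convergence}) gives $r_k = \mathcal{O}(k^{-1/(\gamma-1)}) = \mathcal{O}(k^{-\frac{pq}{p+1-pq}})$.

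The main obstacle I anticipate is precisely the bookkeeping between $a_k = f(x_k)-f_*$ and $r_k = \mathcal{R}_k - f_*$: the KL inequality is naturally stated in terms of $f(x_k)$, while the usable descent is in terms of $\mathcal{R}_k$, and these two sequences need not be monotone relative to each other. The device to handle this — used in \cite{kanzow2024convergence} and similar nonmonotone analyses — is to show that $r_k - r_{k+1} \ge u_{\min}(r_k - a_{k+1}) \ge 0$ (from the identity $\mathcal{R}_{k+1}-\mathcal{R}_k = -u_{k+1}(\mathcal{R}_k - f(x_{k+1}))$ in the proof of Lemma~\ref{lem:3}), so that $a_{k+1} \le r_k$ always, and then either $a_{k+1} \ge \frac12 r_k$ (in which case $r_k - r_{k+1} \ge u_{\min}(r_k - a_{k+1})$ is not directly useful, so instead one uses $a_{k+1}\le r_k$ to replace $r_k$ by $a_{k+1}$ in the KL estimate up to a factor $2$) or $a_{k+1} < \frac12 r_k$ (in which case $r_k - r_{k+1} \ge u_{\min}(r_k - a_{k+1}) > \frac{u_{\min}}{2} r_k$, giving immediate linear decrease of $r_k$ on this step). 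A careful case split along these lines, feeding the first case into the KL-derived recursion $a_{k+1}^{(p+1)/(pq)} \le C''(r_k - r_{k+1})$ and handling the second case trivially, should close the argument; I would then finish by applying the standard real-sequence decay lemma to $(r_k)$ and transferring the rate to $(f(x_k)-f_*)$ via $a_k \le r_k$ (and $a_k \ge$ a constant times $r_k$ infinitely often, or simply $a_k\to 0$ with the rate inherited since $r_k \ge a_k \ge 0$ and $r_k$ has the claimed rate).
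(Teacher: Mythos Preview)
Your ingredients are all correct and your case-split argument would close, but you are making the ``bookkeeping between $a_k$ and $r_k$'' harder than it needs to be. The paper avoids this obstacle entirely by inserting the KL bound \emph{before} passing to a recursion, directly into the convex-combination identity you wrote down: from $r_{k+1} = (1-u_{k+1})r_k + u_{k+1}a_{k+1}$ one substitutes $a_{k+1} \le \sigma_q S_f(x_{k+1})^q \le \sigma_q C^q \|x_{k+1}-x_k\|^{pq} \le \kappa_q (r_k - r_{k+1})^{pq/(p+1)}$ to obtain immediately
\[
r_{k+1} \le (1-u_{k+1})r_k + \kappa_q (r_k - r_{k+1})^{\frac{pq}{p+1}},
\]
and rearranging gives $u_{\min} r_{k+1} \le u_{k+1} r_k \le (r_k - r_{k+1}) + \kappa_q (r_k - r_{k+1})^{pq/(p+1)}$. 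This is already a clean two-term recursion in $r_k$ alone, with no need to compare $a_{k+1}$ against $\tfrac12 r_k$; the paper then applies a ready-made sequence lemma (Lemma~6 in \cite{nabou2024efficiency}) with exponent $\theta = \tfrac{p+1}{pq}$ to read off linear versus sublinear decay.

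So your route---first isolate $a_{k+1}^{(p+1)/(pq)} \le C''(r_k - r_{k+1})$, then split cases on $a_{k+1} \gtrless \tfrac12 r_k$ to force a recursion in $r_k$---is a genuinely different (and workable) argument, closer in spirit to the Bolte--Sabach--Teboulle style KL analyses. What the paper's approach buys is brevity: by keeping the convex-combination structure intact until \emph{after} applying KL, the transition from $a_{k+1}$ to $r_{k+1}$ is automatic and the case split never arises. What your approach buys is a single-power recursion $r_{k+1}^\gamma \lesssim r_k - r_{k+1}$ rather than the paper's two-term form, so you can invoke the more elementary sequence lemma directly rather than the two-term version.
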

\begin{proof}
    From the definition of $\mathcal{R}_k$, we have
    \begin{equation*}
        \begin{split}
        \mathcal{R}_{k+1} - f^* &= (1-u_{k+1})\mathcal{R}_k + u_{k+1}f(x_{k+1}) - f^*\\
        & = (1-u_{k+1})(\mathcal{R}_k  - f^*)+ u_{k+1}(f(x_{k+1}) - f^*)\\
        &\leq (1-u_{k+1})(\mathcal{R}_k  - f^*)+ u_{k+1} \sigma_q S_f(x_{k+1})^q\\
        &\leq (1-u_{k+1})(\mathcal{R}_k  - f^*)+ u_{k+1} \sigma_q \frac{L + p!\theta M_{\max}}{p!}\norm{x_{k+1} - x_k}^{qp}\\
        & \leq (1-u_{k+1})(\mathcal{R}_k  - f^*) \\
         \MoveEqLeft[-1] + u_{k+1} \sigma_q \frac{L + p!\theta M_{\max}}{p!} \left(\frac{(p+1)!}{u_{\min} \Tilde{M}} \right)^{\frac{pq}{p+1}}\left(\mathcal{R}_k - \mathcal{R}_{k+1}\right)^{\frac{pq}{p+1}},
        \end{split}
    \end{equation*}
    where the first equality follows from the definition of $\mathcal{R}_{k+1}$, the first inequality follows from \eqref{eq:kl}, the second inequality follows from \eqref{eq:bound:dist}, the third inequality follows from the. descent \eqref{eq:des_rk}
    Then, we get
    \[
        \mathcal{R}_{k+1} - f^* \leq (1-u_{k+1})(\mathcal{R}_k  - f^*) + \kappa_q \left(\mathcal{R}_k - \mathcal{R}_{k+1}\right)^{\frac{pq}{p+1}},
    \]
    where $\kappa_q = \sigma_q \frac{L + p!\theta M_{\max}}{p!} \left(\frac{(p+1)!}{u_{\min} \Tilde{M}} \right)^{\frac{pq}{p+1}}$. 
    Let us  denote $\delta_{k} = \mathcal{R}_k - f_*$. Thus we obtain
    \[
        \delta_{k+1} \leq (1-u_{k+1})\delta_{k} +  \kappa_q \left( \delta_{k}-\delta_{k+1}\right)^{\frac{qp}{p+1}},
    \]
    rearranging the above inequality, it follows that
    \[
        u_{\min}\delta_{k+1} \leq u_{\min}\delta_{k} \leq u_{k+1}\delta_k \leq \left( \delta_{k} - \delta_{k+1}\right) + \kappa_q \left( \delta_{k}-\delta_{k+1}\right)^{\frac{qp}{p+1}}.
    \]
    Subsequently, we derive the following recurrence
    \begin{align*}
        \delta_{k+1} \leq \frac{1}{u_{\min}}\left( \delta_{k} - \delta_{k+1}\right) + \frac{\kappa_q}{u_{\min}} \left( \delta_{k}-\delta_{k+1}\right)^{\frac{qp}{p+1}}. 
    \end{align*}
    Using Lemma 6 in \cite{nabou2024efficiency} with $\theta = \frac{p+1}{pq}$ and that $f(x_k) - f_* \leq \mathcal{R}_k - f_*$, our assertions follow. \hfill\qed
\end{proof}
\begin{remark}
    In this section, we have derived improved convergence rates in terms of function values for  sequence $(x_k)_{k\geq 0}$ generated by NHOTA,  by leveraging higher-order information to solve  problem  \eqref{eq:problem}. As mentioned in our previous remark, our convergence analysis does not rely on global Lipschitz continuity assumptions or a strict descent condition in the objective function, making our approach more general and applicable to a broader class of nonconvex optimization problems. 
\end{remark}

\section{Convex convergence analysis}\label{sec:5}
In this section, we assume that \( f \) is a convex function, and we define \( f^* = \min_{x} f(x) \) as the global minimum of \( f \). Convexity ensures that any local minimum is also a global minimum, making the problem well-posed. 
We aim to establish global convergence rate for the iterates generated by NHOTA \ref{alg:NOHTA} in the context of convex composite optimization problems. Specifically, we demonstrate that the iterates converge to the global minimum and establish a sublinear convergence rate function values. Recall that $\delta_k = \mathcal{R}_k - f^*$ and define the constants $C: =\frac{D(L + p!\theta + M_{\max})}{p!}\left(\frac{(p+1)!}{u_{\min}\Tilde{M}}\right)^{\frac{p}{p+1}} $, $\mu_0 =  \left(\frac{u_{\min}}{1 + C}\right)^{p+1} \delta_0$ . We now present the global convergence rate for convex composite problems.
\begin{theorem}\label{th:3}
Let Assumption \ref{ass:1} be satisfied and additionally assume that $\mathcal{L}_{f}(x_0)$ is bounded. Let $f$ be convex function and $(x_k)_{k\geq 0}$ be generated by NHOTA. Then, the sequence $(\delta_k - \delta_{k+1})$ converges to $0$. Moreover, if $(\delta_k - \delta_{k+1}) \geq 1$ then have the following convergence rate  
\begin{equation}
    f(x_k) - f^* \leq \left(\frac{\frac{1+C}{u_{\min}}}{ 1 + \frac{1+C}{u_{\min}}}\right)^{k}\delta_0.
\end{equation}
Otherwise, if $(\delta_k - \delta_{k+1}) < 1$ we have
\begin{equation}
    f(x_k) - f^*  \leq \frac{((1 + p)(1 + \mu_0^{\frac{1}{p}}))^p}{k^{p}}. 
\end{equation}
\end{theorem}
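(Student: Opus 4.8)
\textbf{Proof proposal for Theorem \ref{th:3}.}

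The plan is to set up a one-step recursion for $\delta_k = \mathcal{R}_k - f^*$ analogous to the one obtained in the KL analysis, but now using convexity of $f$ in place of the KL inequality, and then to feed this recursion into an elementary sequence lemma (the same Lemma~6 of \cite{nabou2024efficiency} invoked in Theorem~\ref{th:5}) to extract the two advertised rates. First I would bound $f(x_{k+1}) - f^*$ in terms of the step size $\|x_{k+1} - x_k\|$. Since $0 \in \partial f(x_{k+1})$ up to the residual controlled by \eqref{eq:solv_sub} and the Taylor bound \eqref{eq:TayAppG1}, there is an element of $\partial f(x_{k+1})$ of norm at most $\frac{L + p!\theta + M_{\max}}{p!}\|x_{k+1}-x_k\|^p$, exactly as established inside the proof of Theorem~\ref{th:2}. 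Convexity then gives $f(x_{k+1}) - f^* \le \langle g_{k+1}, x_{k+1} - x^*\rangle \le \|g_{k+1}\|_* \cdot \|x_{k+1} - x^*\| \le \|g_{k+1}\|_* \, D$, since the boundedness of $\mathcal{L}_f(x_0)$ forces $x_{k+1}, x^* \in \mathcal{L}_f(x_0)$ and hence their distance is at most $D$. Combining,
\[
f(x_{k+1}) - f^* \le \frac{D(L + p!\theta + M_{\max})}{p!}\|x_{k+1} - x_k\|^p.
\]

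Next I would convert the step length into a decrement of $\mathcal{R}_k$. By the descent inequality \eqref{eq:des_rk}, $\|x_{k+1}-x_k\|^{p+1} \le \frac{(p+1)!}{u_{\min}\Tilde{M}}(\mathcal{R}_k - \mathcal{R}_{k+1}) = \frac{(p+1)!}{u_{\min}\Tilde{M}}(\delta_k - \delta_{k+1})$, so $\|x_{k+1}-x_k\|^p \le \left(\frac{(p+1)!}{u_{\min}\Tilde{M}}(\delta_k-\delta_{k+1})\right)^{\frac{p}{p+1}}$. Plugging this into the previous display and using the definition of $C$, one gets $f(x_{k+1}) - f^* \le C (\delta_k - \delta_{k+1})^{\frac{p}{p+1}}$. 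Now expand $\delta_{k+1}$ through the update rule $\mathcal{R}_{k+1} = (1-u_{k+1})\mathcal{R}_k + u_{k+1} f(x_{k+1})$ to write $\delta_{k+1} = (1-u_{k+1})\delta_k + u_{k+1}(f(x_{k+1}) - f^*) \le (1-u_{k+1})\delta_k + u_{k+1} C(\delta_k-\delta_{k+1})^{\frac{p}{p+1}}$. Rearranging as in Theorem~\ref{th:5}: $u_{\min}\delta_{k+1} \le u_{k+1}\delta_k - (\mathcal{R}_{k+1}-\mathcal{R}_k+u_{k+1}\delta_k)\cdots$ — more directly, $u_{\min}\delta_{k+1} \le u_{\min}\delta_k \le u_{k+1}\delta_k = (\delta_k - \delta_{k+1}) + \delta_{k+1} - (1-u_{k+1})\delta_k \le (\delta_k-\delta_{k+1}) + u_{k+1}C(\delta_k-\delta_{k+1})^{\frac{p}{p+1}}$, whence
\[
\delta_{k+1} \le \tfrac{1}{u_{\min}}(\delta_k - \delta_{k+1}) + \tfrac{C}{u_{\min}}(\delta_k - \delta_{k+1})^{\frac{p}{p+1}}.
\]
This is precisely the recursion treated by Lemma~6 of \cite{nabou2024efficiency} with exponent $\theta = \frac{p+1}{p}$: when the one-step gap $\delta_k-\delta_{k+1}$ is at least $1$ the linear term dominates and one obtains the geometric rate with ratio $\frac{(1+C)/u_{\min}}{1 + (1+C)/u_{\min}}$; when the gap is below $1$ the power term dominates and the lemma yields the $\mathcal{O}(k^{-p})$ rate with the stated constant involving $\mu_0$. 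Since $\mathcal{R}_k$ is monotonically decreasing and bounded below (Lemma~\ref{lem:1} together with Assumption~\ref{ass:1}(1)), $\delta_k$ converges, so $\delta_k - \delta_{k+1} \to 0$; finally $f(x_k) - f^* \le \mathcal{R}_k - f^* = \delta_k$ transfers the rate to the objective.

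The main obstacle I anticipate is not any single inequality but bookkeeping the constants correctly through the chain — in particular making sure the exponent $\frac{p}{p+1}$ (rather than $\frac{pq}{p+1}$ as in the KL case) lines up with the hypothesis $\theta = \frac{p+1}{p}$ required by Lemma~6 of \cite{nabou2024efficiency}, and checking that the two regimes $(\delta_k - \delta_{k+1}) \ge 1$ versus $< 1$ in the theorem statement correspond exactly to the case split in that lemma. A secondary subtlety is justifying $f(x_{k+1}) - f^* \le \|g_{k+1}\|_* D$: this needs $x^*$ (a minimizer) to lie in $\mathcal{L}_f(x_0)$, which holds because $f(x^*) = f^* \le f(x_0)$, and it needs the iterates to stay in $\mathcal{L}_f(x_0)$, which was already recorded in the remark following Lemma~\ref{lem:1}. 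Everything else is a direct transcription of the estimates already developed for Theorems~\ref{th:2} and~\ref{th:5}.
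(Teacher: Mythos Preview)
Your proposal is correct and follows essentially the same route as the paper: convexity plus the subgradient bound from Theorem~\ref{th:2} plus the descent \eqref{eq:des_rk} yield exactly the recursion $\delta_{k+1} \le \tfrac{1}{u_{\min}}(\delta_k - \delta_{k+1}) + \tfrac{C}{u_{\min}}(\delta_k - \delta_{k+1})^{\frac{p}{p+1}}$, and the two regimes are split according to whether $\delta_k-\delta_{k+1}$ lies above or below~$1$. The only discrepancy is in the final reference: the paper does \emph{not} invoke Lemma~6 of \cite{nabou2024efficiency} here (that lemma's case split is on the exponent, not on the gap size), but instead carries out the $\ge 1$ case by hand to get the geometric ratio, and for the $<1$ case rescales to $\mu_k = \bigl(\tfrac{u_{\min}}{1+C}\bigr)^{p+1}\delta_k$ so that $\mu_k^{\frac{p+1}{p}} \le \mu_k - \mu_{k+1}$ and then appeals to Lemma~A.1 of \cite{nesterov2022inexact}. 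Substantively this is the same argument you outlined; just be careful that the lemma you cite actually delivers the gap-based dichotomy you describe.
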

\begin{proof}
    Note that since $\mathcal{R}_k$ is decreasing and $\delta_k \geq 0$, then $\delta_k$ converges and thus  $(\delta_k - \delta_{k+1})\to 0$. Further,
    from the update of $\mathcal{R}_k$ (line 7 in NHOTA \ref{alg:NOHTA}) and the convexity of $f$, we get 
    \begin{equation}
    \begin{split}
        \mathcal{R}_{k+1} - f^* &= (1-u_{k+1})\mathcal{R}_k + u_{k+1}f(x_{k+1}) - f^*\\
       & = (1-u_{k+1})(\mathcal{R}_k  - f^*)+ u_{k+1}(f(x_{k+1}) - f^*)\\
       &\leq (1-u_{k+1})(\mathcal{R}_k  - f^*)+ u_{k+1}\left(\langle \nabla F(x_{k+1}) + p_{k+1} ,x_{k+1} - x^* \rangle \right)\\
        &\leq (1-u_{k+1})(\mathcal{R}_k  - f^*)+ u_{k+1} \left(\norm{\nabla F(x_{k+1}) + p_{k+1}}\norm{x_{k+1} - x^*} \right)\\
        &\leq (1-u_{k+1})(\mathcal{R}_k  - f^*)+ D u_{k+1}\norm{\nabla F(x_{k+1}) + p_{k+1}},
    \end{split}
    \end{equation}
    for $p_{k+1} \in\partial h(x_{k+1})$, the last inequality follows from the boundedness of diameter of $\mathcal{L}_f (x_0)$. Then combining the last inequality with \eqref{eq:bounded_sub_iterates}, we get
    \begin{equation*}
    \begin{split}
        \mathcal{R}_{k+1} & - f^*
        \leq (1-u_{k+1})(\mathcal{R}_k  - f^*) + \frac{D(L + p!\theta + M_{\max})}{p!}\norm{x_{k+1} - x_k}^p\\
        & 
        \leq (1-u_{k+1})(\mathcal{R}_k  - f^*) + \frac{D(L + p!\theta + M_{\max})}{p!}\left(\frac{(p+1)!}{u_{\min}\Tilde{M}} (\mathcal{R}_{k} - \mathcal{R}_{k+1}) \right)^{\frac{p}{p+1}}\\
        & = (1-u_{k+1})(\mathcal{R}_k  - f^*) + C(\mathcal{R}_{k} - \mathcal{R}_{k+1})^{\frac{p}{p+1}}.
    \end{split}
    \end{equation*}
    Denote $\delta_k: = \mathcal{R}_{k} - f^* \geq f(x_k) - f^* \geq 0$. As a result, we obtain
    \begin{equation*}
       \delta_{k+1} \leq (1-u_{k+1})\delta_k + C(\delta_k - \delta_{k+1})^{\frac{p}{p+1}},
    \end{equation*}
     implies
    \begin{equation}
       u_{k+1}\delta_{k} \leq (\delta_k -\delta_{k+1})  + C(\delta_k - \delta_{k+1})^{\frac{p}{p+1}}. 
    \end{equation}
    Furthermore, since $u_{\min} \leq u_{k+1}$ and $\mathcal{R}_{k+1} \leq \mathcal{R}_k$, then $0\leq \delta_{k+1} \leq \delta_k$ and thus we obtain the following recurrence
    \begin{equation}
       \delta_{k+1} \leq \frac{1}{u_{\min}}(\delta_k -\delta_{k+1})  + \frac{C}{u_{\min}}(\delta_k - \delta_{k+1})^{\frac{p}{p+1}}. 
    \end{equation}
    Given that $\delta_k$ is decreasing and bounded from below by $0$, then  $(\delta_k -\delta_{k+1}) \to 0$. Let as consider the following two cases. If $(\delta_k -\delta_{k+1}) \geq 1$, then we get
    \[
        \delta_{k+1} \leq \frac{1+C}{u_{\min}} (\delta_k -\delta_{k+1}) \implies     \delta_{k+1} \leq \frac{\frac{1+C}{u_{\min}}}{ 1 + \frac{1+C}{u_{\min}}} \delta_k.
    \]
    Consequently, it follows
    \[
        f(x_k) - f^* \leq \delta_k \leq \left(\frac{\frac{1+C}{u_{\min}}}{ 1 + \frac{1+C}{u_{\min}}}\right)^{k}\delta_0.
    \]
    On the other hand, if $(\delta_k -\delta_{k+1}) < 1$ then
    \[
        \delta_{k+1} \leq \frac{1+C}{u_{\min}} (\delta_k -\delta_{k+1})^{\frac{p}{p+1}} \implies 
        \left(\frac{u_{\min}}{1 + C}\right)^{\frac{p+1}{p}} \delta_k^{\frac{p}{p+1}} \leq  \delta_k -\delta_{k+1}.
    \]
    Thus, for $\mu_k =  \left(\frac{u_{\min}}{1 + C}\right)^{p+1} \delta_k$, the recursive inequality is as follows
    \[
        \mu_k^{\frac{p}{p+1}} \leq  \mu_k -\mu_{k+1}.
    \]
    Using lemma A.1 in \cite{nesterov2022inexact} with $\alpha = \frac{1}{p}$, we get
    \begin{equation*}
       f(x_k) - f^* \leq \delta_{k} \leq \frac{((1 + p)(1 + \mu_0^{\frac{1}{p}}))^p}{k^{p}}. 
    \end{equation*}
    This proves our claim. \hfill\qed
\end{proof}
\begin{remark}
In Theorem \ref{th:3}, we establish convergence rate guarantees based on the quantity \( \delta_k - \delta_{k+1} \). Specifically, if \( \delta_k - \delta_{k+1} \geq 1 \), we obtain a linear convergence rate in function value. Conversely, if \( \delta_k - \delta_{k+1} < 1 \), we prove a sublinear convergence rate of order \( \mathcal{O}(k^{-p}) \) for convex composite optimization problems, which aligns with the typical convergence rates of higher-order methods for convex (unconstrained) problems \cite{nesterov2021implementable,nabou2024efficiency}. As mentioned in our previous remark, our convergence analysis does not rely on global Lipschitz continuity assumptions or a strict descent condition in the objective function, making our approach more general and applicable to a broader class of convex optimization problems of the form \eqref{eq:problem}.  
\end{remark}


\section{Numerical illustrations}\label{sec:6}
In this section, we evaluate the performance of the proposed method for nonconvex phase retrieval problems \cite{candes2015phase} with $l_1$ norm regularization. 
Our implementation was carried out on a MacBook M2 with 16GB of RAM, using the Julia programming language.   
\subsection{Nonconvex phase retrieval}
In our experiments we apply NHOTA for \( p = 2 \) to solve the following nonconvex phase retrieval problem with \(\ell_1\) regularization:  
\begin{equation}
    \min_{x \in \mathbb{R}^n} f(x) = \frac{1}{2m} \sum_{i=1}^{m} \left(y_i - (a_i^\top x)^2 \right)^2 + \lambda \|x\|_1,
\end{equation}  
where \( y_i \in \mathbb{R} \) represents the observed measurements, and \( a_i \in \mathbb{R}^n \) are the sensing vectors. This problem follows the structure of the composite problems \eqref{eq:problem} with \( F(x) = \frac{1}{2m} \sum_{i=1}^{m} \left(y_i - (a_i^\top x)^2 \right)^2 \) and \( h(x) = \lambda \|x\|_1 \). Since the function \( x \mapsto \left(y_i - (a_i^\top x)^2 \right)^2 \) is a polynomial of degree four, its Hessian is not globally Lipschitz continuous, which fits our setting as we do not require this property. The data is generated as follows: we set \( n = 100 \), \( m = 5000 \), and  
\( a_i, z \sim \mathcal{N}(0,0.5)\), \(
x_0 \sim \mathcal{N}(0,1)\)
where the samples are generated element-wise, with \( z \) denoting the true underlying object. The measurements are generated as  
\[
    y_i = (a_i^\top z)^2 + n_i,\; \text{for}\; i=1:m,
\]
where \( n_i \) represents random noise, and we set the regularization parameter to \( \lambda = 10^{-5} \). The proposed method is evaluated for different fixed values of \( u_k \in \{0.05, 0.25, 0.5, 0.75, 1\} \) across iterations, for all \( k \geq 0 \), as specified in line 7 of Algorithm \ref{alg:NOHTA}. All subproblems are solved using IPOPT \cite{wachter2006implementation}. The results, shown in Figure \ref{fig:convergence2}, indicate that all choices of \( u \) lead to convergence to a stationary point and a significant reduction in the objective function, depending on the noise level. Notably, when \( u = 1 \), the objective function decreases monotonically, whereas for \( u < 1 \), it does not necessarily follow a strictly decreasing trend. However, despite this lack of strict monotonicity, the final accuracy remains comparable to that of the monotone case, suggesting that allowing some flexibility in descent does not compromise solution quality. Regarding computational efficiency, it is difficult to determine an optimal choice for \( u \), but cases where \( u < 1 \) tend to perform better in terms of CPU time. This is likely because the algorithm does not enforce strict monotonicity in these cases, potentially allowing for larger step sizes and reducing the number of backtracking steps, leading to lower overall computational cost.
We run the proposed method for different fixed values of \( u_k := u \in \{0.05, 0.25, 0.5, 0.75, 1\} \) for all \( k \geq 0 \), as specified in line 7 of Algorithm \ref{alg:NOHTA}. The stopping criterion used is either \( f(x_k) \leq 10^{-3} \) or \( \|\nabla F(x_k)\| \leq 10^{-3} \)(we omit the term \( \|\nabla F(x_k) + \lambda g_k\| \approx \|\nabla F(x_k)\| \) for all \( g \in \partial h(x_k) \) due to \( \lambda = 10^{-5} \) and \( \|g_k\| \leq 10 \), as its contribution is negligible). All subproblems are solved using IPOPT \cite{wachter2006implementation}, and the results are presented in Figure \ref{fig:convergence2}.
Our first remark is that regardless of the chosen $u$, all cases successfully converge to a stationary point and achieve a significant reduction in the objective function/norm of the gradient depending of the noise level. A key observation is that when \( u < 1 \), the objective function does not strictly decrease along the iterates, in contrast to the case when \( u = 1 \), where it exhibits monotonic descent. However, despite this lack of strict monotonicity, the non-monotone cases (\( u < 1 \)) produce a first-order solution similar to that of the monotone setting (\( u = 1 \)). This suggests that allowing some flexibility in the descent process does not necessarily compromise solution quality. Finally, when analyzing performance with respect to computational time, it is difficult to determine a clear optimal choice for \( u \). Interestingly, some cases where \( u < 1 \) tend to perform better than \( u = 1 \) in terms of CPU efficiency. This can be attributed to the fact that when \( u < 1 \), the algorithm does not strictly enforce a monotonic decrease in the objective function. At the same time, it does not necessarily require more iterations than the monotone case, which may allow for larger step sizes in certain iterations. As a result, the backtracking procedure may take fewer steps in these cases, leading to a reduction in overall computation time. In conclusion, we believe that for certain applications, using smaller values of \( u \) (i.e., \( u < 1 \)) could potentially achieve better accuracy than the fully monotone case (\( u = 1 \)). However, we cannot provide a definitive answer at this stage. In future work, we plan to investigate this approach across different applications to gain a deeper understanding of its impact.

\begin{figure}[ht]
    \centering
    \begin{subfigure}[b]{0.49\textwidth}  %
        \centering
        \includegraphics[width=\textwidth, height=\textheight, keepaspectratio]{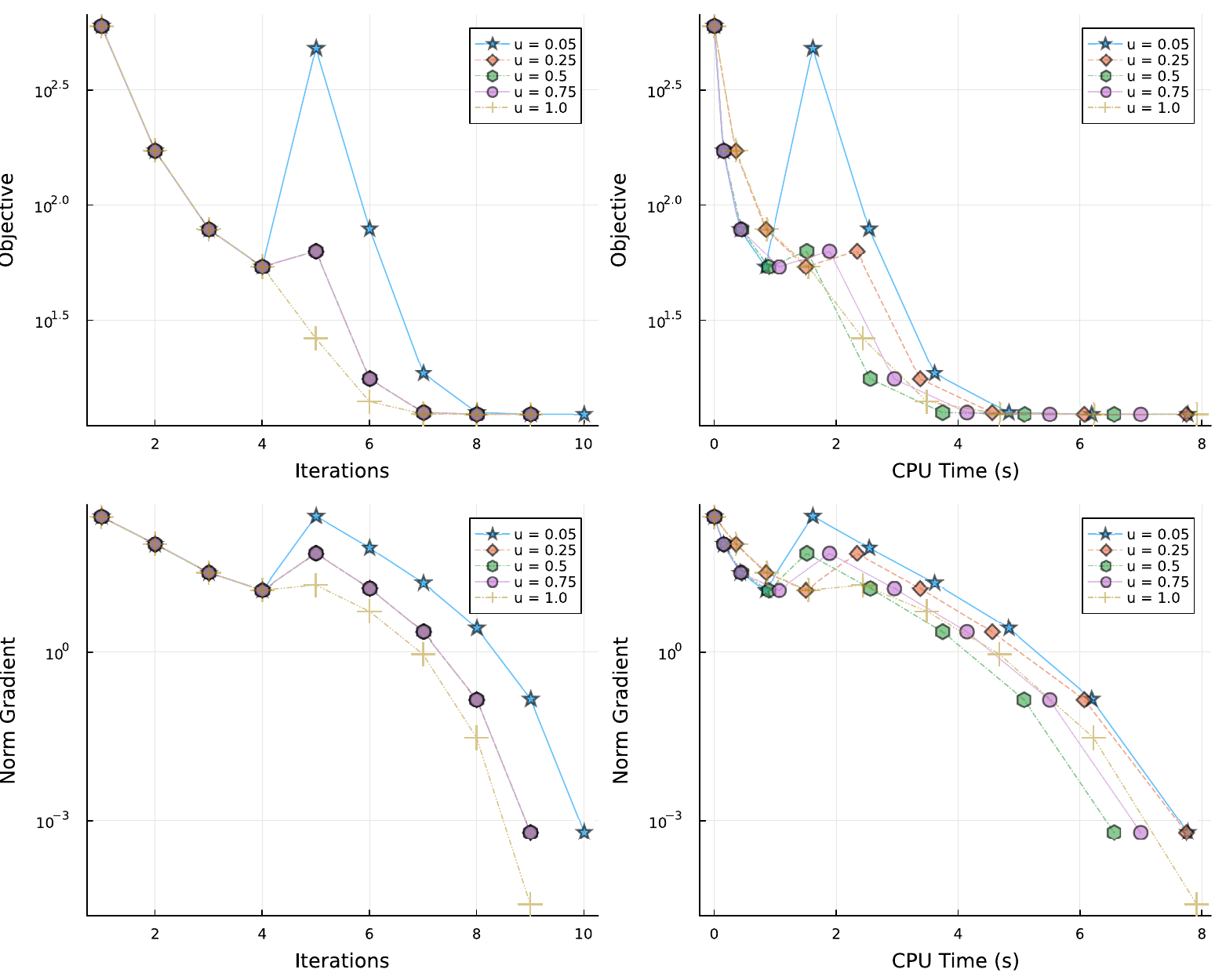}
        \caption{Level noise $n_i \sim \mathcal{N}(0,5)$}
        \label{fig:sub1}
    \end{subfigure}
    \hfill
    \begin{subfigure}[b]{0.49\textwidth}  %
        \centering
        \includegraphics[width=\textwidth, height=\textheight, keepaspectratio]{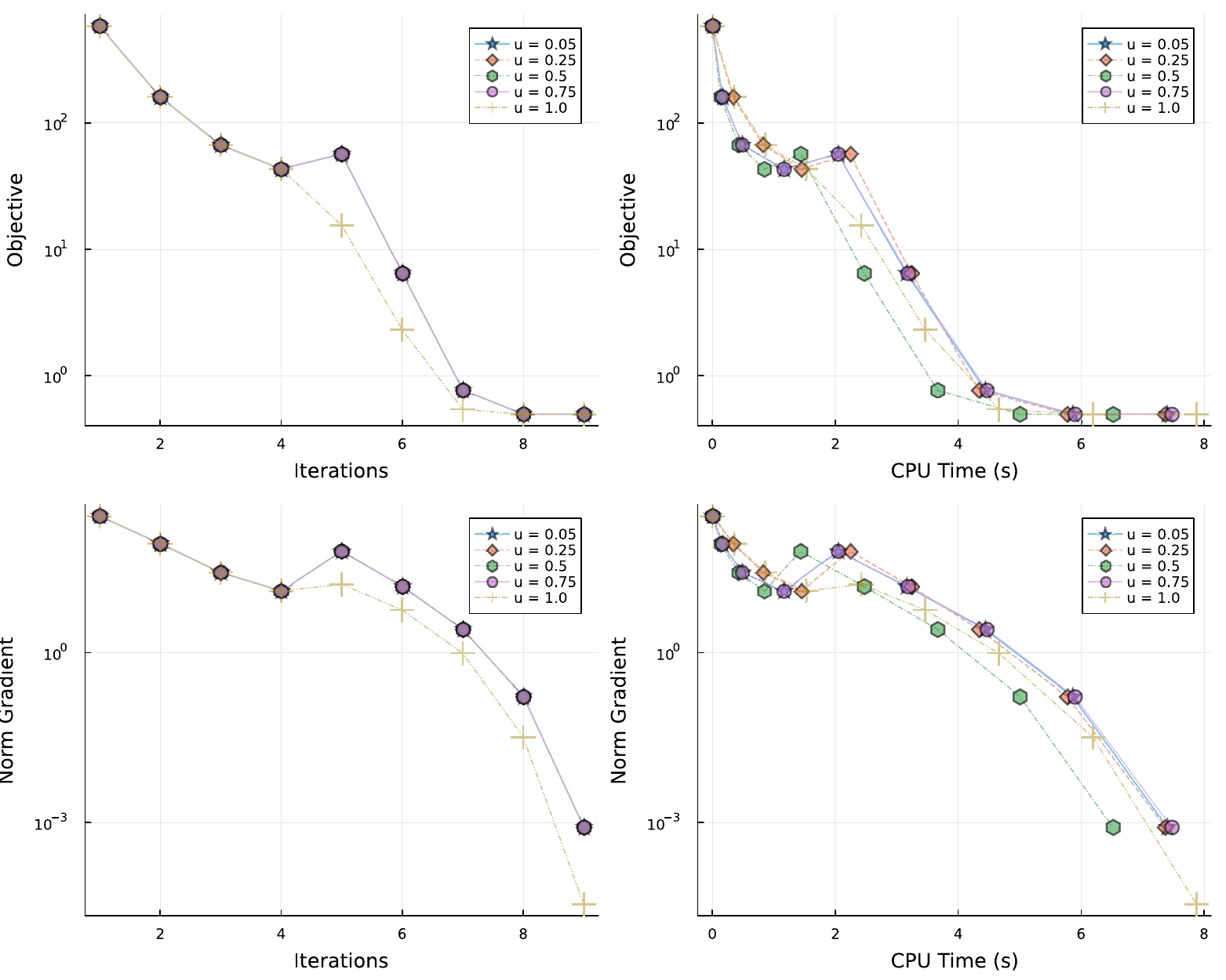}
        \caption{Level noise $n_i \sim \mathcal{N}(0,1)$}
        \label{fig:sub2}
    \end{subfigure}
    \caption{Performance of NHOTA in terms of the gradient norm \( \|\nabla F(x_k)\| \), objective value \(f(x_k)\), and CPU time across iterations. The term \( \|\nabla F(x_k) + \lambda g_k\| \approx \|\nabla F(x_k)\| \) for all \( g \in \partial h(x_k) \) is omitted due to \( \lambda = 10^{-5} \) and \( \|g_k\| \leq 10 \), as its contribution is negligible.}
    \label{fig:convergence2}
\end{figure}

\section{Conclusion}\label{sec:7}
In this paper, we propose NHOTA, a Nonmonotone Higher-Order Taylor Approximation method for solving composite optimization problems. Our approach leverages higher-order tensor methods while avoiding traditional assumptions such as global Lipschitz continuity or strict descent conditions on the objective function. We derive global convergence guarantees for convex and nonxonvex composite problems, demonstrating that NHOTA achieves a convergence rate of order \( \mathcal{O}(k^{-\frac{p}{p+1}}) \) in the nonconvex setting,  with improved rates under the Kurdyka-Łojasiewicz (KL) property, and achieves sublinear convergence rate of order \( \mathcal{O}(k^{-p}) \) in the convex case. These results highlight that NHOTA maintains the efficiency of classical higher-order methods while offering flexibility and broader applicability.
Several directions remain open for future research. First, extending NHOTA to composition, smooth constrained, and nonlinear least squares optimization problems \cite{nabou2024efficiency,nabou2024moving,Yass2024} and analyzing its behavior would provide valuable insights. Furthermore, developing an accelerated variant of the proposed method for convex problems presents an interesting direction for future research \cite{nesterov2021implementable,nesterov2022inexact,grapiglia2020tensor}. Another promising direction is the efficient implementation of NHOTA on modern hardware to accelerate large-scale optimization tasks, since the computational of solving the model remains a concern, particularly when using \( p \)-th derivatives for \( p > 2 \), which may be impractical in many applications. Finally, extending our results for \( p=2 \) to the regularized Newton method \cite{mishchenko2023regularized,doikov2024gradient} is another interesting direction. 


\medskip

\begin{thebibliography}{10}

\bibitem{agafonov2024inexact}
A{.\nobreak\kern 0.33333em}Agafonov, D{.\nobreak\kern 0.33333em}Kamzolov, P{.\nobreak\kern 0.33333em}Dvurechensky, A{.\nobreak\kern 0.33333em}Gasnikov, and M{.\nobreak\kern 0.33333em}Tak{\'a}{\v{c}}, Inexact tensor methods and their application to stochastic convex optimization, \emph{Optimization Methods and Software} 39 (2024),  42--83.

\bibitem{agarwal2017finding}
N{.\nobreak\kern 0.33333em}Agarwal, Z{.\nobreak\kern 0.33333em}Allen-Zhu, B{.\nobreak\kern 0.33333em}Bullins, E{.\nobreak\kern 0.33333em}Hazan, and T{.\nobreak\kern 0.33333em}Ma, Finding approximate local minima faster than gradient descent, in \emph{Proceedings of the 49th Annual ACM SIGACT Symposium on Theory of Computing}, 2017,  1195--1199.

\bibitem{ayers1988iterative}
G{.\nobreak\kern 0.33333em}Ayers and J.\,C{.\nobreak\kern 0.33333em}Dainty, Iterative blind deconvolution method and its applications, \emph{Optics letters} 13 (1988),  547--549.

\bibitem{baes2009estimate}
M{.\nobreak\kern 0.33333em}Baes, Estimate sequence methods: extensions and approximations, \emph{Institute for Operations Research, ETH, Z{\"u}rich, Switzerland} 2 (2009).

\bibitem{bertero1998novel}
M{.\nobreak\kern 0.33333em}Bertero, D{.\nobreak\kern 0.33333em}Bindi, P{.\nobreak\kern 0.33333em}Boccacci, M{.\nobreak\kern 0.33333em}Cattaneo, C{.\nobreak\kern 0.33333em}Eva, and V{.\nobreak\kern 0.33333em}Lanza, A novel blind-deconvolution method with an application to seismology, \emph{Inverse Problems} 14 (1998),  815.

\bibitem{birgin2017worst}
E.\,G{.\nobreak\kern 0.33333em}Birgin, J{.\nobreak\kern 0.33333em}Gardenghi, J.\,M{.\nobreak\kern 0.33333em}Mart{\'\i}nez, S.\,A{.\nobreak\kern 0.33333em}Santos, and P.\,L{.\nobreak\kern 0.33333em}Toint, Worst-case evaluation complexity for unconstrained nonlinear optimization using high-order regularized models, \emph{Mathematical Programming} 163 (2017),  359--368.

\bibitem{bolte2007lojasiewicz}
J{.\nobreak\kern 0.33333em}Bolte, A{.\nobreak\kern 0.33333em}Daniilidis, and A{.\nobreak\kern 0.33333em}Lewis, The {\L}ojasiewicz inequality for nonsmooth subanalytic functions with applications to subgradient dynamical systems, \emph{SIAM Journal on Optimization} 17 (2007),  1205--1223.

\bibitem{bolte2007clarke}
J{.\nobreak\kern 0.33333em}Bolte, A{.\nobreak\kern 0.33333em}Daniilidis, A{.\nobreak\kern 0.33333em}Lewis, and M{.\nobreak\kern 0.33333em}Shiota, Clarke subgradients of stratifiable functions, \emph{SIAM Journal on Optimization} 18 (2007),  556--572.

\bibitem{bolte2014proximal}
J{.\nobreak\kern 0.33333em}Bolte, S{.\nobreak\kern 0.33333em}Sabach, and M{.\nobreak\kern 0.33333em}Teboulle, Proximal alternating linearized minimization for nonconvex and nonsmooth problems, \emph{Mathematical Programming} 146 (2014),  459--494.

\bibitem{candes2015phase}
E.\,J{.\nobreak\kern 0.33333em}Candes, X{.\nobreak\kern 0.33333em}Li, and M{.\nobreak\kern 0.33333em}Soltanolkotabi, Phase retrieval via Wirtinger flow: Theory and algorithms, \emph{IEEE Transactions on Information Theory} 61 (2015),  1985--2007.

\bibitem{cartis2011adaptive}
C{.\nobreak\kern 0.33333em}Cartis, N.\,I{.\nobreak\kern 0.33333em}Gould, and P.\,L{.\nobreak\kern 0.33333em}Toint, Adaptive cubic regularisation methods for unconstrained optimization. Part I: motivation, convergence and numerical results, \emph{Mathematical Programming} 127 (2011),  245--295.

\bibitem{cartis2011adaptiveII}
C{.\nobreak\kern 0.33333em}Cartis, N.\,I{.\nobreak\kern 0.33333em}Gould, and P.\,L{.\nobreak\kern 0.33333em}Toint, Adaptive cubic regularisation methods for unconstrained optimization. Part II: worst-case function-and derivative-evaluation complexity, \emph{Mathematical programming} 130 (2011),  295--319.

\bibitem{cartis2019universal}
C{.\nobreak\kern 0.33333em}Cartis, N.\,I{.\nobreak\kern 0.33333em}Gould, and P.\,L{.\nobreak\kern 0.33333em}Toint, Universal regularization methods: varying the power, the smoothness and the accuracy, \emph{SIAM Journal on Optimization} 29 (2019),  595--615.

\bibitem{de2023proximal}
A{.\nobreak\kern 0.33333em}De~Marchi, Proximal gradient methods beyond monotony, \emph{Journal of Nonsmooth Analysis and Optimization} 4 (2023).

\bibitem{deleu2021structured}
T{.\nobreak\kern 0.33333em}Deleu and Y{.\nobreak\kern 0.33333em}Bengio, Structured sparsity inducing adaptive optimizers for deep learning, \emph{arXiv preprint arXiv:2102.03869}  (2021).

\bibitem{doikov2024gradient}
N{.\nobreak\kern 0.33333em}Doikov and Y{.\nobreak\kern 0.33333em}Nesterov, Gradient regularization of Newton method with Bregman distances, \emph{Mathematical programming} 204 (2024),  1--25.

\bibitem{grapiglia2020tensor}
G.\,N{.\nobreak\kern 0.33333em}Grapiglia and Y{.\nobreak\kern 0.33333em}Nesterov, Tensor methods for minimizing convex functions with Hölder continuous higher-order derivatives, \emph{SIAM Journal on Optimization} 30 (2020),  2750--2779.

\bibitem{hsieh2011sparse}
C.\,J{.\nobreak\kern 0.33333em}Hsieh, I{.\nobreak\kern 0.33333em}Dhillon, P{.\nobreak\kern 0.33333em}Ravikumar, and M{.\nobreak\kern 0.33333em}Sustik, Sparse inverse covariance matrix estimation using quadratic approximation, \emph{Advances in neural information processing systems} 24 (2011).

\bibitem{jiang2020unified}
B{.\nobreak\kern 0.33333em}Jiang, T{.\nobreak\kern 0.33333em}Lin, and S{.\nobreak\kern 0.33333em}Zhang, A unified adaptive tensor approximation scheme to accelerate composite convex optimization, \emph{SIAM Journal on Optimization} 30 (2020),  2897--2926.

\bibitem{kanzow2024convergence}
C{.\nobreak\kern 0.33333em}Kanzow and L{.\nobreak\kern 0.33333em}Lehmann, Convergence of Nonmonotone Proximal Gradient Methods under the Kurdyka-Lojasiewicz Property without a Global Lipschitz Assumption, \emph{arXiv preprint arXiv:2411.12376}  (2024).

\bibitem{kanzow2022convergence}
C{.\nobreak\kern 0.33333em}Kanzow and P{.\nobreak\kern 0.33333em}Mehlitz, Convergence properties of monotone and nonmonotone proximal gradient methods revisited, \emph{Journal of Optimization Theory and Applications} 195 (2022),  624--646.

\bibitem{lee1999learning}
D.\,D{.\nobreak\kern 0.33333em}Lee and H.\,S{.\nobreak\kern 0.33333em}Seung, Learning the parts of objects by non-negative matrix factorization, \emph{nature} 401 (1999),  788--791.

\bibitem{lee2014proximal}
J.\,D{.\nobreak\kern 0.33333em}Lee, Y{.\nobreak\kern 0.33333em}Sun, and M.\,A{.\nobreak\kern 0.33333em}Saunders, Proximal Newton-type methods for minimizing composite functions, \emph{SIAM Journal on Optimization} 24 (2014),  1420--1443.

\bibitem{lin2007projected}
C.\,J{.\nobreak\kern 0.33333em}Lin, Projected gradient methods for nonnegative matrix factorization, \emph{Neural computation} 19 (2007),  2756--2779.

\bibitem{mairal2009online}
J{.\nobreak\kern 0.33333em}Mairal, F{.\nobreak\kern 0.33333em}Bach, J{.\nobreak\kern 0.33333em}Ponce, and G{.\nobreak\kern 0.33333em}Sapiro, Online dictionary learning for sparse coding, in \emph{Proceedings of the 26th annual international conference on machine learning}, 2009,  689--696.

\bibitem{martinez2017high}
J.\,M{.\nobreak\kern 0.33333em}Mart{\'\i}nez, On high-order model regularization for constrained optimization, \emph{SIAM Journal on Optimization} 27 (2017),  2447--2458.

\bibitem{meier2008group}
L{.\nobreak\kern 0.33333em}Meier, S{.\nobreak\kern 0.33333em}Van De~Geer, and P{.\nobreak\kern 0.33333em}B{\"u}hlmann, The group lasso for logistic regression, \emph{Journal of the Royal Statistical Society Series B: Statistical Methodology} 70 (2008),  53--71.

\bibitem{mishchenko2023regularized}
K{.\nobreak\kern 0.33333em}Mishchenko, Regularized Newton method with global convergence, \emph{SIAM Journal on Optimization} 33 (2023),  1440--1462.

\bibitem{nabou2024efficiency}
Y{.\nobreak\kern 0.33333em}Nabou and I{.\nobreak\kern 0.33333em}Necoara, Efficiency of higher-order algorithms for minimizing composite functions, \emph{Computational Optimization and Applications} 87 (2024),  441--473.

\bibitem{nabou2024moving}
Y{.\nobreak\kern 0.33333em}Nabou and I{.\nobreak\kern 0.33333em}Necoara, Moving higher-order Taylor approximations method for smooth constrained minimization problems, \emph{arXiv preprint arXiv:2402.15022}  (2024).

\bibitem{Yass2024}
Y{.\nobreak\kern 0.33333em}Nabou and I{.\nobreak\kern 0.33333em}Necoara, Regularized higher-order Taylor approximation methods for nonlinear least-squares  (2024).
\newblock Submitted for publication.

\bibitem{nesterov2013gradient}
Y{.\nobreak\kern 0.33333em}Nesterov, Gradient methods for minimizing composite functions, \emph{Mathematical programming} 140 (2013),  125--161.

\bibitem{nesterov2021implementable}
Y{.\nobreak\kern 0.33333em}Nesterov, Implementable tensor methods in unconstrained convex optimization, \emph{Mathematical Programming} 186 (2021),  157--183.

\bibitem{nesterov2022inexact}
Y{.\nobreak\kern 0.33333em}Nesterov, Inexact basic tensor methods for some classes of convex optimization problems, \emph{Optimization Methods and Software} 37 (2022),  878--906.

\bibitem{nesterov2006cubic}
Y{.\nobreak\kern 0.33333em}Nesterov and B.\,T{.\nobreak\kern 0.33333em}Polyak, Cubic regularization of Newton method and its global performance, \emph{Mathematical programming} 108 (2006),  177--205.

\bibitem{oztoprak2012newton}
F{.\nobreak\kern 0.33333em}Oztoprak, J{.\nobreak\kern 0.33333em}Nocedal, S{.\nobreak\kern 0.33333em}Rennie, and P.\,A{.\nobreak\kern 0.33333em}Olsen, Newton-like methods for sparse inverse covariance estimation, \emph{Advances in neural information processing systems} 25 (2012).

\bibitem{parikh2014proximal}
N{.\nobreak\kern 0.33333em}Parikh, S{.\nobreak\kern 0.33333em}Boyd, et~al., Proximal algorithms, \emph{Foundations and trends{\textregistered} in Optimization} 1 (2014),  127--239.

\bibitem{qian2024convergence}
Y{.\nobreak\kern 0.33333em}Qian, T{.\nobreak\kern 0.33333em}Tao, S{.\nobreak\kern 0.33333em}Pan, and H{.\nobreak\kern 0.33333em}Qi, Convergence of ZH-type nonmonotone descent method for Kurdyka–Łojasiewicz optimization problems, \emph{arXiv preprint arXiv:2406.05740}  (2024).

\bibitem{ravikumar2011high}
P{.\nobreak\kern 0.33333em}Ravikumar, M.\,J{.\nobreak\kern 0.33333em}Wainwright, G{.\nobreak\kern 0.33333em}Raskutti, and B{.\nobreak\kern 0.33333em}Yu, High-dimensional covariance estimation by minimizing l 1-penalized log-determinant divergence, \emph{Electronic Journal of Statistics} 5 (2011).

\bibitem{tibshirani1996regression}
R{.\nobreak\kern 0.33333em}Tibshirani, Regression shrinkage and selection via the lasso, \emph{Journal of the Royal Statistical Society Series B: Statistical Methodology} 58 (1996),  267--288.

\bibitem{wachter2006implementation}
A{.\nobreak\kern 0.33333em}W{\"a}chter and L.\,T{.\nobreak\kern 0.33333em}Biegler, On the implementation of an interior-point filter line-search algorithm for large-scale nonlinear programming, \emph{Mathematical programming} 106 (2006),  25--57.

\bibitem{zhang2004nonmonotone}
H{.\nobreak\kern 0.33333em}Zhang and W.\,W{.\nobreak\kern 0.33333em}Hager, A nonmonotone line search technique and its application to unconstrained optimization, \emph{SIAM journal on Optimization} 14 (2004),  1043--1056.

\end{thebibliography}



\end{document}